\theoremstyle{plain}
\newtheorem{thm}{Theorem}
\newtheorem{lem}{Lemma}
\def \F {\mathcal{F}}
\def \K {\mathcal{K}}
\def \R {\mathbb{R}}
\def \P {\mathbb{P}}
\def \E {\mathbb{E}}
\def \N {\mathcal{N}}
\newcommand{\beqe}{\begin{eqnarray*}}
\newcommand{\eeqe}{\end{eqnarray*}}
\newcommand{\pa}[1]{\left({#1}\right)}
\newcommand{\cro}[1]{\left[{#1}\right]}
\newcommand{\ab}[1]{\left|{#1}\right|}
\newcommand{\ac}[1]{\left\{{#1}\right\}}
\newcommand{\un}{\mathds{1}}
\begin{document}
\begin{frontmatter}

\title{Multidimensional two-component Gaussian mixtures detection}
\runtitle{Multidimensional two-component Gaussian mixtures detection}

\begin{aug}
\author{B\'eatrice Laurent \ead[label=e1]{beatrice.laurent@insa-toulouse.fr}}
\and
\author{Cl\'ement Marteau\ead[label=e2]{clement.marteau@insa-toulouse.fr}}
\and
\author{Cathy Maugis-Rabusseau\ead[label=e3]{cathy.maugis@insa-toulouse.fr}}
\address{Institut de Math\'ematiques de Toulouse, INSA de Toulouse, Universit\'e de Toulouse\\
         INSA de Toulouse,\\ 135, avenue de Rangueil,\\  31077 Toulouse Cedex 4, France.}
\runauthor{Laurent et al.}
\end{aug}

\begin{abstract}
Let $(X_1,\ldots,X_n)$ be a $d$-dimensional i.i.d sample from a distribution with density $f$. The problem of detection of a two-component mixture is considered. Our aim is to decide whether 
$f$ is the density of a standard Gaussian random $d$-vector ($f=\phi_d$) against $f$ is a two-component mixture: $f=(1-\varepsilon)\phi_d +\varepsilon \phi_d (.-\mu)$ where $(\varepsilon,\mu)$ are  unknown parameters. Optimal separation conditions on $\varepsilon, \mu, n$ and the dimension $d$ are established, allowing to separate both hypotheses with prescribed errors. Several testing procedures are proposed and two alternative subsets are considered. 
\end{abstract}

\begin{keyword}[class=AMS]
\kwd[Primary ]{62H15}
\kwd[; secondary ]{62G30}
\end{keyword}

\begin{keyword}
\kwd{
Gaussian mixtures, Non-asymptotic testing procedure, Order statistics, Separation rates}
\end{keyword}

\end{frontmatter}

\section{Introduction}

Let $\underline X= (X_1,\ldots,X_n)$ be an i.i.d n-sample, where for all $i \in \lbrace 1,\dots, n \rbrace$, $X_i$ corresponds to a $d$-dimensional random vector, whose distribution admits a density $f$ w.r.t the Lebesgue measure on $\R^d$. In the following, we denote by $\phi_d(.)$ the density function of the standard Gaussian distribution $\N_d(0_d,I_d)$ on $\mathbb{R}^d$. Our aim is to test 
\begin{equation}
H_0 : f= \phi_d \quad  \mathrm{against} \quad  H_1 : f\in\F,
\label{eq:pb}
\end{equation}
where
$$
\F=\ac{f_{(\varepsilon,\mu)} : x\in\R^d \mapsto (1-\varepsilon) \phi_d(x) + \varepsilon \phi_d(x-\mu); \varepsilon\in]0,1[, \mu\in\R^d}
$$
is the set of two-component Gaussian mixtures on $\mathbb{R}^d$. Mixture models are at the core of several studies and provide a powerful paradigm that allows to model several practical phenomena. We refer to \cite{McLachlan_Peel} for an extended introduction to this topic.  

The particular case of a two-component mixture is sometimes referred as a contamination model. In some sense, a proportion $\varepsilon$ of the sample is driven from a (Gaussian) distribution centered in $\mu$ while the remaining part of the data is centered. In this context, the testing problem (\ref{eq:pb}) amounts to the detection of a plausible contamination inside the data at hand w.r.t. the null distribution.  We refer for instance to \cite{Donoho_Jin} for practical motivations regarding this problem. We stress that Gaussian mixture is at the core of our contribution since it provides a benchmark model for several practical applications. However, the results proposed in this paper could be certainly extended to a wide range of alternative distributions.  

In a unidimensional setting ($d=1$), the testing problem (\ref{eq:pb}) has been widely considered in the literature in the last two decades. A large attention has been payed to methods based on the likelihood ratio, see e.g. \cite{Chernoff_Lander}, \cite{Azais_Gassiat_Mercadier} or \cite{Garel}. Concerning the construction of optimal separation conditions on the parameters $(\varepsilon,\mu)$, we can mention the seminal contribution of \cite{Ingster}. These conditions have been reached by the higher-criticism procedure proposed in \cite{Donoho_Jin} in a specific \textit{sparse} context, i.e. when $\varepsilon \ll 1/\sqrt{n}$ as $n\rightarrow +\infty$. Then, several extensions of this contribution have been proposed in an extended context: we mention for instance \cite{Cai_Jin_Low} for a study including confidence sets and the \textit{dense} setting ($\varepsilon \gg 1/\sqrt{n}$ as $n\rightarrow +\infty$), \cite{Cai_Jeng_low} for heterogeneous and heteroscedastic mixtures, or \cite{Cai_Wu} where general distributions and separation conditions have been investigated. In a slightly different spirit, a procedure based on the order statistics and non-asymptotic investigations on the testing problem (\ref{eq:pb}) have been proposed in \cite{LLM2014}.

In the contributions mentioned above, only unidimensional distributions are considered. In a different setting (signal detection), multidimensional problems have been at the core of recent investigations. We mention e.g. \cite{ACCD_2011} or \cite{BI_2013} among others. In a recent paper, \cite{Verzelen_AriasCastro} address the problem of testing normality in a multidimensional framework. They consider two-component Gaussian mixture alternatives where the proportions  are fixed and the difference in means are sparse.  However, up to our knowledge, the  multidimensional testing problem as displayed in (\ref{eq:pb}) has never been studied so far. We stress that in our setting, the proportion $\varepsilon$ is allowed to depend on the number of observations $n$. The present paper proposes a first attempt in this context. Our aim is two-fold: we  establish \textit{optimal} separation conditions on the parameters $(\varepsilon,\mu)$ for the testing problem (\ref{eq:pb}) in a first time, and describe  the influence of the dimension $d$ on the corresponding problem. In the same time, we propose various testing procedures and compare their theoretical performances. 

In this paper, we  assume that the norm of the mean parameter $\mu$ is bounded on the alternative $H_1$. Given $M\in \mathbb{R}_+^*$, we  deal with the subsets $\mathcal{F}_2[M]$ and $\mathcal{F}_\infty[M]$ defined as
$$\mathcal F_2[M] = \ac{f_{(\varepsilon,\mu)} \in  \F;\ \varepsilon\in]0,1[, \|\mu\| \leq M},$$
and 
$$
\F_\infty[M] = \ac{ f_{(\varepsilon,\mu)} \in  \F;\ \varepsilon\in]0,1[, \|\mu\|_\infty \leq M },
$$
where for a given $\mu \in \mathbb{R}^d$,  $\|\mu\|=\pa{\sum_{j=1}^d \mu_j^2 }^{1/2}$ denotes the $l_2$-norm and $\|\mu\|_\infty = \max_{j=1\dots d} |\mu_j |$ corresponds to the $l_{\infty}$-norm. In this context, our main results can be gathered in the following theorem. 

\begin{thm}
\label{thm:main}
Let $\alpha,\beta \in ]0,1[$ be fixed and $\mathcal{C}_+=\mathcal{C}_{+}(\alpha,\beta,M)$, $\mathcal{C}_-=\mathcal{C}_{-}(\alpha,\beta,M)$ be two explicit constants.  Then, there exists a level-$\alpha$ testing procedure $\tilde\Psi_{\alpha,2}$ such that
$$ \underset{\underset{\varepsilon \|\mu\|> \mathcal{C}_+ d^{1/4}/\sqrt{n}}{f\in \mathcal F_2[M]}}{\sup} \P_f(\tilde\Psi_{\alpha,2}=0) \leq \beta\hspace*{0.5cm} \textrm { and }\hspace*{0.5cm}
\inf_{\Psi_\alpha} \underset{\underset{\varepsilon \|\mu\|> \mathcal{C}_- d^{1/4}/\sqrt{n}}{f\in \mathcal F_2[M]}}{\sup}\P_f(\Psi_{\alpha}=0) > \beta,$$
where the infimum is taken on all possible level-$\alpha$ testing procedures $\Psi_\alpha$. 

Similarly, there exists a level-$\alpha$ testing procedure $\tilde\Psi_{\alpha,\infty}$ such that
$$ \underset{\underset{\varepsilon \|\mu\|_\infty> \mathcal{C}_+ \sqrt{\ln(d)/ n}}{f\in \mathcal F_\infty[M]}}{\sup} \P_f(\tilde\Psi_{\alpha,\infty}=0) \leq \beta\hspace*{0.5cm} \textrm { and }\hspace*{0.5cm}
 \inf_{\Psi_\alpha} \underset{\underset{\varepsilon \|\mu\|_\infty> \mathcal{C}_- \sqrt{\ln(d) / n}}{f\in \mathcal F_\infty[M]}}{\sup}\P_f(\Psi_{\alpha}=0) > \beta.$$
\end{thm}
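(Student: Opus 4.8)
The plan is to treat the two parametrizations in parallel: for each upper bound I build an explicit mean-based test, and for each matching lower bound I run a second-moment (Bayesian) argument. The unifying observation is that a two-component mixture differs from $\phi_d$ at the level of the first moment by exactly $\varepsilon\mu$, since $\E_f[X_1]=\varepsilon\mu$. Hence the empirical mean $\bar X=n^{-1}\sum_{i=1}^n X_i$ carries a signal of size $\varepsilon\|\mu\|$ (resp.\ $\varepsilon\|\mu\|_\infty$) in the relevant norm, and both rates are dictated by the fluctuations of $\bar X$ in that norm. Note that the resulting tests do not require knowledge of $(\varepsilon,\mu)$.

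\textbf{Upper bounds.} For the $l_2$ statement I would take $\tilde\Psi_{\alpha,2}=\un\ac{n\|\bar X\|^2>t_\alpha}$, with $t_\alpha$ the $(1-\alpha)$-quantile of $\chi^2_d$. Under $H_0$, $\sqrt n\,\bar X\sim\N_d(0_d,I_d)$ exactly, so $n\|\bar X\|^2\sim\chi^2_d$ and the level is exactly $\alpha$. Under $H_1$, $\sqrt n\,\bar X$ has mean $\sqrt n\,\varepsilon\mu$ and covariance $I_d+\varepsilon(1-\varepsilon)\mu\mu^\top$, so $n\|\bar X\|^2$ behaves like a noncentral $\chi^2_d$ with noncentrality $\lambda=n\varepsilon^2\|\mu\|^2$. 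As $t_\alpha=d+z_\alpha\sqrt{2d}(1+o(1))$ while the statistic concentrates around $d+\lambda$ with fluctuations of order $\sqrt d$, a separation $\lambda\gtrsim\sqrt d$, i.e.\ $\varepsilon\|\mu\|\gtrsim d^{1/4}/\sqrt n$, pushes the type~II error below $\beta$ once $\mathcal C_+$ is of order $z_\alpha+z_\beta$. The only care needed is that $\bar X$ is not exactly Gaussian under $H_1$; I would absorb this, together with the $O(\varepsilon\|\mu\|^2)$ covariance correction, via a Bernstein-type concentration bound for $n\|\bar X\|^2$, both being negligible against $\sqrt d$. For the $l_\infty$ statement I would take $\tilde\Psi_{\alpha,\infty}=\un\ac{\max_{1\le j\le d}\sqrt n\,\ab{\bar X_j}>s_\alpha}$ with $s_\alpha\simeq\sqrt{2\ln(2d/\alpha)}$ calibrated by a union bound; detection then follows once the shift $\sqrt n\,\varepsilon\|\mu\|_\infty$ in the largest coordinate exceeds $s_\alpha+z_\beta\simeq\sqrt{2\ln d}$, i.e.\ $\varepsilon\|\mu\|_\infty\gtrsim\sqrt{\ln(d)/n}$.

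\textbf{Lower bounds.} I would use the standard reduction: for any prior $\pi$ supported on the relevant alternative and any level-$\alpha$ test $\Psi_\alpha$,
\[
\sup_{f}\P_f(\Psi_\alpha=0)\ \ge\ \bar P_\pi(\Psi_\alpha=0)\ \ge\ 1-\alpha-\tfrac12\sqrt{\chi^2(\bar P_\pi,P_0^{\otimes n})},
\]
where $\bar P_\pi=\int P_{(\varepsilon,\mu)}^{\otimes n}\,d\pi$, so it suffices to keep the $\chi^2$-divergence bounded for a prior placed just above the threshold. Using $p_{(\varepsilon,\mu)}/\phi_d=1+\varepsilon(e^{\langle\mu,\cdot\rangle-\|\mu\|^2/2}-1)$ together with $\E_0[e^{\langle\mu,X\rangle-\|\mu\|^2/2}e^{\langle\mu',X\rangle-\|\mu'\|^2/2}]=e^{\langle\mu,\mu'\rangle}$ yields the clean identity
\[
1+\chi^2(\bar P_\pi,P_0^{\otimes n})=\E_{\mu,\mu'}\cro{\pa{1+\varepsilon^2\pa{e^{\langle\mu,\mu'\rangle}-1}}^{n}},
\]
with $\mu,\mu'$ independent from the mean-marginal of $\pi$. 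For the $l_2$ rate I would fix $\|\mu\|=M$ and set $\mu=Mu$ with $u$ uniform on $S^{d-1}$; since $\langle u,u'\rangle$ is of order $d^{-1/2}$, a first-order expansion reduces the right-hand side to $\E[\exp(n\varepsilon^2M^2\langle u,u'\rangle)]\simeq\exp\ac{(n\varepsilon^2M^2)^2/(2d)}$, bounded precisely when $n\varepsilon^2M^2\lesssim\sqrt d$, i.e.\ $\varepsilon\|\mu\|\lesssim d^{1/4}/\sqrt n$. For the $l_\infty$ rate I would instead use the sparse prior $\mu=\pm c\,e_J$ with $J$ uniform on $\ac{1,\dots,d}$ and a uniform sign; then $\langle\mu,\mu'\rangle\ne0$ only when the coordinates coincide, and the identity collapses to $1+O(d^{-1}\exp\ac{n\varepsilon^2(e^{c^2}-1)})$, bounded exactly when $n\varepsilon^2c^2\lesssim\ln d$, i.e.\ $\varepsilon\|\mu\|_\infty\lesssim\sqrt{\ln(d)/n}$. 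Taking $\mathcal C_-$ small enough makes $\tfrac12\sqrt{\chi^2}<1-\alpha-\beta$, giving the claim.

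\textbf{Main obstacle.} The upper bound is routine once the (non)central $\chi^2$ and Gaussian-maximum tails are combined with concentration to handle the non-Gaussianity of $\bar X$. The delicate step is the exponential-moment control in the lower bound: since the $n$-th power amounts to exponentiating $n\varepsilon^2(e^{\langle\mu,\mu'\rangle}-1)$, I must check that the first-order expansion in $\langle\mu,\mu'\rangle$ is legitimate over the full range of integration and that the higher-order terms, as well as the mismatch between the exact law of $\langle u,u'\rangle$ on $S^{d-1}$ and its Gaussian approximation, do not inflate the bound. This is exactly where the constraint $\|\mu\|\le M$ is used: it keeps $\langle\mu,\mu'\rangle$ uniformly small, dominates the Taylor remainder, and lets the $\chi^2$-divergence be governed by its leading term, so that the constants $\mathcal C_\pm$ can be rendered explicit in $\alpha,\beta,M$.
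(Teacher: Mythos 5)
Your proposal is correct in substance and, for three of its four ingredients, follows the paper's own route. The $l_2$ upper bound is exactly the paper's test $\Psi_{1,\alpha}$ (reject when $n\|\bar X_n\|^2$ exceeds the $(1-\alpha)$-quantile of $\chi^2_d$), and both lower bounds are the same second-moment reduction, resting on the same identity $\E_0[L_\pi^2(\underline X)]=\E\cro{\pa{1+\varepsilon^2\pa{e^{\langle\mu,\mu'\rangle}-1}}^n}$; the paper's $l_2$ prior is $\mu=(r/\sqrt d)\,\omega$ with $\omega$ uniform on $\lbrace-1,1\rbrace^d$, so that $\langle\mu,\mu'\rangle$ is a rescaled sum of i.i.d.\ Rademacher variables and Hoeffding's inequality handles directly the exponential-moment step you rightly identify as delicate (your uniform-on-the-sphere prior works too, but needs the sub-Gaussian coordinate bound in place of Hoeffding), while its $l_\infty$ prior is the one-sparse $\mu=r e_J$ exactly as you propose (the random sign changes nothing). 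Two genuine differences are worth noting. First, for the law of $n\|\bar X_n\|^2$ under $H_1$, the paper avoids any Bernstein-type correction: writing $X_i=V_i\mu+\eta_i$ and conditioning on $S=\sum_i V_i\sim\mathcal{B}(n,\varepsilon)$, the statistic is \emph{exactly} noncentral $\chi^2_d$ with random noncentrality $S^2\|\mu\|^2/n$, and a Chebyshev bound on $S$ finishes the argument — cleaner than approximating the unconditional law of $\bar X_n$. Second, your $l_\infty$ upper bound is a different test: you threshold $\max_{1\le j\le d}\sqrt n\,|\bar X_{n,j}|$ at roughly $\sqrt{2\ln(2d/\alpha)}$, whereas the paper applies the unidimensional order-statistics test of \cite{LLM2014} coordinate-wise with a Bonferroni correction ($\Psi_{3,\alpha}$). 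Your version is more elementary (union bound under $H_0$, Chebyshev on the distinguished coordinate under $H_1$) and attains $\sqrt{\ln(d)/n}$ with no extraneous factor, whereas the paper's Theorem \ref{thm:ub3} carries an additional $\sqrt{\ln\ln n}$ and a side condition on $n$; in that sense your route matches the displayed statement of Theorem \ref{thm:main} more directly. What the paper's heavier procedure buys is not needed for this theorem: it is chosen for its expected robustness and adaptivity (e.g.\ toward the sparse regime $\varepsilon\ll 1/\sqrt n$) discussed in Section \ref{s:conclusion}.
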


Theorem \ref{thm:main} indicates that the detection boundary associated to the testing problem (\ref{eq:pb}) for the alternative subset $\mathcal F_2[M]$ is of order $d^{1/4}/\sqrt{n}$: detection is impossible (with a prescribed level $\beta$) if $\varepsilon \|\mu\|$ is smaller than $d^{1/4}/\sqrt{n}$, up to some constant. 
In the case of the alternative set $\F_\infty[M]$, the detection boundary depends on $\sqrt{\ln(d)}$. In these two cases, we propose optimal testing strategies in Sections  \ref{ss:ub1}, \ref{ss:ub2} and \ref{ss:ub3}. \\

The paper is organized as follows. Two different lower bounds are proposed in Section~\ref{s:lb} for both subsets $\F_2[M] $ and $\F_\infty[M] $ and proved in Section~\ref{s:plb}. Associated upper bounds are established in Section~\ref{s:ub} and proved in Section~\ref{s:pub}. To this end, we will investigate the performances of three different testing procedures. Some useful lemmas are gathered in Appendix~\ref{s:append:1}, while Appendix~\ref{s:append:2} contains some technical results. 

All along the paper, we use the following notations.  For any density $g$ on $\mathbb{R}^d$, we denote respectively by $\P_g$ and $\E_g$ the probability and expectation under the assumption that the common density of each $X_i$ in the i.i.d. sample $(X_1, \ldots, X_n)$ is $g$. In the particular case where the $ X_1, \ldots, X_n$ are i.i.d. with common density $\phi_d$, which is associated to the null hypothesis  $H_0$, we write $\P_0:=\P_{\phi_d}$ and $\E_0:=\E_{\phi_d}$. A testing procedure $\Psi$  denotes a measurable function of the sample $\underline{X}$, having values in $\lbrace 0,1\rbrace$. By convention, we reject (resp. do not reject) $H_0$ is $\Psi=1$ (resp. $\Psi=0$). Given $\alpha \in ]0,1[$, the test $\Psi$ is said to be of level $\alpha$ if $\P_{0}(\Psi=1) \leq \alpha$. In such a case, we write $\Psi =\Psi_\alpha$.

\section{Lower bounds}
\label{s:lb}

%

\subsection{Lower bound for the alternative class $\F_2[M]$}
\label{ss:lb1}
The non asymptotic minimax separation rates have been introduced by \cite{Yannick}. Let us recall the main definitions. 
Given $\beta\in]0,1[$, the class of alternatives $\F_2[M]$ and a level-$\alpha$ test $\Psi_\alpha$, we define the uniform separation $\rho(\Psi_\alpha,\F_2[M],\beta)$ of $\Psi_\alpha$ over the class 
$\F_2[M]$ as the smallest positive number $\rho$ such that the test has a second kind error at most equal to $\beta$ for all alternatives $f_{(\varepsilon,\mu)}$ in $\F_2[M]$ such that $\varepsilon \|\mu\| \geq \rho$. 
More precisely,
$$
\rho(\Psi_\alpha,\F_2[M],\beta) = \inf \left\{\rho>0; \underset{\underset{\varepsilon \|\mu\| \geq \rho}{f\in \F_2[M]}}{\sup} \P_{f}(\Psi_\alpha=0)\leq \beta \right\}.
$$
Then, the $(\alpha,\beta)$-minimax separation rate over $\F_2[M]$ is defined as
$$
\underline{\rho}(\F_2[M],\alpha,\beta) = \underset{\Psi_\alpha}{\inf}\ \rho(\Psi_\alpha,\F_2[M],\beta),
$$
where the infimum is taken over all level-$\alpha$ tests $\Psi_\alpha$.\\

Theorem \ref{thm:lb1} proposes a lower bound for the minimax separation rate $\underline{\rho}(\mathcal F_2[M],\alpha,\beta)$.
The main ingredient for the proof (displayed in Section \ref{s:plb}) is the construction of particular distributions for which the separation of both hypotheses $H_0$ and $H_1$ will be impossible with a  prescribed level $\beta$.

\begin{thm}
\label{thm:lb1}
Let $\alpha,\beta \in ]0,1[$ such that $\alpha+\beta <0.29$. Define
$$
\rho^\# =   \frac{1}{2\sqrt{C(M)}} \frac{d^{1/4}}{\sqrt{n}}, \quad  \mathrm{where} \quad C(M)=1 + \frac{M^2}{2}e^{M^2}.
$$
Then, if $\rho<\rho^\#$,  
\begin{equation}
\underset{\Psi_\alpha}{\inf} \underset{\underset{\varepsilon \|\mu\|  \geq \rho}{f\in \mathcal F_2[M]}}{\sup} \P_f(\Psi_\alpha=0) >\beta,
\label{eq:lb1}
\end{equation}
where the infimum is taken over all level-$\alpha$ tests. In particular, this implies that
$$
\underline{\rho}(\mathcal F_2[M],\alpha,\beta) \geq \rho^\#.
$$
\end{thm}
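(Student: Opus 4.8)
The plan is to establish the lower bound via the standard two-point (or composite Bayesian) reduction method for minimax testing, based on controlling a chi-square divergence between the null distribution and a suitable mixture over the alternative.

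The plan is to bound the minimax second-kind error from below by a Bayesian reduction. I would introduce a prior $\pi$ supported on the ``hardest'' part of the alternative, namely on densities $f_{(\varepsilon,\mu)}\in\mathcal F_2[M]$ with $\varepsilon\|\mu\|\ge\rho$, and form the mixture $\bar\P_\pi=\int \P_f^{\otimes n}\,d\pi(f)$ of the sample distributions. For any level-$\alpha$ test $\Psi_\alpha$ one has $\sup_f \P_f(\Psi_\alpha=0)\ge \bar\P_\pi(\Psi_\alpha=0)\ge 1-\alpha-\|\bar\P_\pi-\P_0\|_{TV}$, and the total variation distance is controlled by the chi-square divergence through $\|\bar\P_\pi-\P_0\|_{TV}\le \tfrac12\sqrt{\chi^2(\bar\P_\pi,\P_0)}$. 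Hence it suffices to exhibit a prior for which $\chi^2(\bar\P_\pi,\P_0)\le 2$ whenever $\rho<\rho^\#$: since the assumption $\alpha+\beta<0.29$ means precisely $\alpha+\beta<1-\tfrac{\sqrt2}{2}$, this yields $\inf_{\Psi_\alpha}\sup_f\P_f(\Psi_\alpha=0)\ge 1-\alpha-\tfrac{\sqrt2}{2}>\beta$, which is exactly (\ref{eq:lb1}).

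The choice of prior is the crux. A single alternative would only produce the one-dimensional rate $1/\sqrt n$, so to reach $d^{1/4}/\sqrt n$ I would randomize the direction of $\mu$: fix $\|\mu\|=M$ and $\varepsilon=\rho/M$ (so that $\varepsilon\|\mu\|=\rho$ and $\varepsilon<1$ for $\rho$ small), and draw the direction of $\mu$ either uniformly on the sphere of radius $M$ or as $M/\sqrt d$ times a vector of independent signs. The key computation is then the exact evaluation of the chi-square: since, conditionally on $(\varepsilon,\mu)$, the observations are i.i.d., Fubini gives
$$1+\chi^2(\bar\P_\pi,\P_0)=\E_{\mu,\mu'}\cro{\pa{\int \frac{f_{(\varepsilon,\mu)}(x)\,f_{(\varepsilon,\mu')}(x)}{\phi_d(x)}\,dx}^n}$$
for two independent copies $\mu,\mu'$ of the prior, and the Gaussian identity $\int \phi_d(x)^{-1}\phi_d(x-\mu)\phi_d(x-\mu')\,dx=e^{\langle\mu,\mu'\rangle}$ reduces the inner integral to $1+\varepsilon^2(e^{\langle\mu,\mu'\rangle}-1)$. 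Thus everything is governed by the scalar random variable $\langle\mu,\mu'\rangle$, which, by the randomization of the direction, is centered with $\E[\langle\mu,\mu'\rangle]=0$, satisfies $\E[\langle\mu,\mu'\rangle^2]=M^4/d$, and obeys $|\langle\mu,\mu'\rangle|\le M^2$ almost surely.

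It then remains to bound $\E_{\mu,\mu'}\big[(1+\varepsilon^2(e^{\langle\mu,\mu'\rangle}-1))^n\big]$. I would use $(1+u)^n\le e^{nu}$ together with the Taylor estimate $e^z-1\le z+\tfrac{z^2}{2}e^{M^2}$, valid for $z\le M^2$, so that the centering $\E[\langle\mu,\mu'\rangle]=0$ kills the linear contribution while the quadratic one, combined with $\E[\langle\mu,\mu'\rangle^2]=M^4/d$, produces the constant $C(M)=1+\tfrac{M^2}{2}e^{M^2}$; this leads to a bound on $\chi^2$ controlled by $C(M)\,(n\rho^2)^2/d$, which stays below $2$ precisely when $n\rho^2\le \sqrt d/(4C(M))$, that is $\rho<\rho^\#$. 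The main obstacle is exactly this last step: because the randomness $\langle\mu,\mu'\rangle$ is shared across all $n$ factors, one cannot factorize the $n$-th power, and the rare but non-negligible event that the two random directions are nearly aligned (where $e^{\langle\mu,\mu'\rangle}$ is as large as $e^{M^2}$) must be shown not to inflate the expectation. Handling this tail --- through the exact moment structure of $\langle\mu,\mu'\rangle$ under the spherical or sign prior, or via a truncation of the prior --- while retaining the explicit constant $C(M)$ is the delicate part of the argument; the resulting lower bound $\underline\rho(\mathcal F_2[M],\alpha,\beta)\ge\rho^\#$ then follows from the definition of the minimax separation rate.
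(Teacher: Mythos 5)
Your reduction coincides with the paper's: the Bayesian argument with $\eta(\alpha,\beta)=2(1-\alpha-\beta)$, the sign prior $\mu=\frac{r}{\sqrt d}\,\omega$, $\omega\in\{-1,1\}^d$ (the paper's choice; your spherical variant would also do), the chi-square identity reducing everything to $\E\cro{\ac{1+\varepsilon^2(e^{\langle\mu,\mu'\rangle}-1)}^n}$ where $\langle\mu,\mu'\rangle$ is distributed as $\frac{r^2}{d}Y$ with $Y$ a sum of $d$ i.i.d.\ Rademacher variables, and the same Taylor inequality producing $C(M)=1+\frac{M^2}{2}e^{M^2}$. But the one step that carries the theorem --- actually bounding that expectation --- is missing, and the mechanism you sketch in its place is wrong. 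After $(1+u)^n\le e^{nu}$, the prior randomness sits inside an exponential common to all $n$ factors, so ``centering kills the linear contribution'' is false: $\E\cro{e^{nZ}}\neq e^{n\E[Z]}$, and Jensen's inequality goes the wrong way. In fact the centered linear term is precisely what produces the rate: $\E\cro{e^{n\rho^2Y/d}}=\pa{\cosh(n\rho^2/d)}^d\approx e^{(n\rho^2)^2/(2d)}$, so it is the exponential-moment (sub-Gaussian) behaviour of $Y$ --- not the second moment $\E[\langle\mu,\mu'\rangle^2]=M^4/d$ --- that forces $n\rho^2\lesssim\sqrt d$, i.e.\ $\rho\lesssim d^{1/4}/\sqrt n$; the quadratic term only affects constants (on its own it would allow $n\rho^2\lesssim d$). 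A second-moment computation therefore cannot close the argument, no matter how it is arranged.

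For comparison, the paper resolves exactly the obstacle you flag as follows: it bounds $1+\varepsilon^2(e^{r^2Y/d}-1)\le 1+a\pa{\frac{\ab{Y}}{\sqrt d}\vee\frac{Y^2}{d}}$ with $a=C(M)\rho^2/\sqrt d$, uses $\ln(1+x)\le x$, splits on the event $\ac{\ab{Y}\le\sqrt d}$ (where the bound is $e^{na}$), and on its complement controls $\E\cro{e^{naY^2/d}\un_{\ac{\ab{Y}>\sqrt d}}}$ by integration by parts together with Hoeffding's inequality, which gives $\P\pa{e^{naY^2/d}>t}\le 2t^{-1/(2na)}$; the tail integral converges exactly when $na<1/2$, and $na\le 1/4$ yields $\E_0\cro{L_\pi^2(\underline X)}\le h(1/4)\le 3<1+\eta(\alpha,\beta)^2$, which is where $\rho^\#$ comes from. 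Your outline could be completed in a parallel way (e.g.\ Cauchy--Schwarz to decouple the linear and quadratic exponents, then the Rademacher MGF bound $\E\cro{e^{sY}}\le e^{ds^2/2}$ for the linear part and a sub-exponential bound for $Y^2/d$), but as submitted the decisive tail estimate is absent, and the claim that $\chi^2\le C(M)(n\rho^2)^2/d$ follows from centering plus the variance of $\langle\mu,\mu'\rangle$ does not hold up.
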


Equation (\ref{eq:lb1}) indicates that the hypotheses $H_0$ and $H_1$ cannot be separated with prescribed first and second kind  errors $\alpha$ and $\beta$ following the value of the terms $\varepsilon, \|\mu \|, d$ and $n$. In particular, for any level-$\alpha$ testing procedure, one can find a distribution $ f\in \mathcal{F}_2[M]$ such that $\varepsilon \|\mu \| \geq \rho^\#$ and $\P_f(\Psi_\alpha=0) >\beta$. This result is obtained thanks to an assumption on the levels $\alpha$ and $\beta$. This assumption is essentially technical and could be removed thanks to additional technical algebra. 

The condition $\varepsilon \|\mu \| \gtrsim d^{1/4} / \sqrt{n}$ is quite informative. First of all, since $\| \mu  \|$ is bounded, the proportion parameter $\varepsilon$ should be at least of order $1/\sqrt{n}$. This condition is often characterized as the \textit{dense} regime in the literature. In the same time, the 'energy' $\| \mu \|$ should not be to small if one expects to detect a potential contamination in the sample. It is worth pointing out that Theorem \ref{thm:lb1} precisely quantifies the role played by the dimension $d$ of the problem at hand.  We will see in Section \ref{s:ub} that this lower bound is optimal, up to some constant.

\subsection{Lower bound for the alternative class $\F_\infty[M]$}
\label{ss:lb2}

In this section, we concentrate our attention on the alternative $\F = \F_\infty[M]$. 
As in Section~\ref{ss:lb1}, we consider the $(\alpha,\beta)$-minimax separation rate over $\F_\infty[M]$ defined as
$$
\underline{\rho}(\F_\infty[M],\alpha,\beta) = \underset{\Psi_\alpha}{\inf}\ \rho(\Psi_\alpha,\F_\infty[M],\beta),
$$
where the infimum is taken over all level-$\alpha$ tests $\Psi_\alpha$ and 
$$
\rho(\Psi_\alpha,\F_\infty[M],\beta) = \inf \left\{\rho>0; \underset{\underset{\varepsilon \|\mu\|_\infty \geq \rho}{f\in \F_\infty[M]}}{\sup} \P_{f}(\Psi_\alpha=0)\leq \beta \right\}.
$$

Theorem \ref{thm:lb2} provides a lower bound for the minimax separation rate in this context. The proof is postponed to Section \ref{s:plb}.

\begin{thm}\label{thm:lb2}
Let $\alpha, \beta \in ]0,1[$ such that $ \alpha + \beta < 1$. Let 
$$
\rho^\star = \sqrt{ \frac{1}{C(M)}  \frac 1 n \ln\cro{1 + d \eta(\alpha,\beta)^2}}
$$
where $C(M)=(1+M^2 e^{M^2}/2)$ and $ \eta(\alpha,\beta) = 2(1-\alpha-\beta)$. Then, if $\rho <  \rho^\star$, 
$$
\underset{\Psi_\alpha}{\inf}\ \underset{\underset{\varepsilon \|\mu\|_\infty \geq \rho }{f\in \mathcal F_\infty[M]}}{\sup}\ \P_f(\Psi_\alpha=0) >\beta.
$$
This implies that
$$
\underline{\rho}(\mathcal F_\infty[M],\alpha,\beta) \geq \rho^\star.
$$
\end{thm}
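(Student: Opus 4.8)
The plan is to use the standard minimax lower bound strategy via a Bayesian reduction, constructing a prior distribution on the alternative set that makes the mixture indistinguishable from the null when $\rho < \rho^\star$. The key quantity to control is the chi-square divergence (or the $L^2$-distance of the likelihood ratio to $1$) between the null distribution $\mathbb{P}_0^{\otimes n}$ and a mixture $\overline{\mathbb{P}}$ obtained by averaging $\mathbb{P}_{f_{(\varepsilon,\mu)}}^{\otimes n}$ over a well-chosen prior on the parameters. Since we work with the $\ell_\infty$-norm constraint $\|\mu\|_\infty \le M$, the natural construction is to treat each of the $d$ coordinates separately: I would place mass on alternatives where the contamination is supported on a single coordinate direction, say $\mu = \pm m\, e_j$ for some fixed $m \le M$ and a uniformly random choice of coordinate $j \in \{1,\dots,d\}$ and sign. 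This \emph{coordinate-wise sparse} prior is exactly the structure that produces the $\ln(d)$ dependence in $\rho^\star$.

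First I would fix the per-coordinate energy by setting $\varepsilon\, m = \rho$ with $m = M$ (or choosing $\varepsilon$ and $m$ so that $\varepsilon \|\mu\|_\infty = \rho$), ensuring every alternative in the support of the prior satisfies the separation constraint. Next I would compute the chi-square distance $\chi^2(\overline{\mathbb{P}}, \mathbb{P}_0^{\otimes n}) = \mathbb{E}_0[(d\overline{\mathbb{P}}/d\mathbb{P}_0^{\otimes n})^2] - 1$. Because the prior mixes over independent coordinate choices and because $\phi_d$ factorizes across coordinates, the likelihood ratio for a single observation factorizes, and the cross terms in the chi-square expansion reduce to a sum over pairs of prior draws. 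The central computation is the single-coordinate moment: for the one-dimensional contamination $(1-\varepsilon)\phi_1(x) + \varepsilon\phi_1(x-m)$, one evaluates $\mathbb{E}_{\phi_1}\!\left[\varepsilon\big(\phi_1(X-m)/\phi_1(X)-1\big)\cdot \varepsilon\big(\phi_1(X-m')/\phi_1(X)-1\big)\right]$, which yields a term of the form $\varepsilon^2(e^{m m'}-1)$. Using $e^{mm'}-1 \le m m' e^{M^2}$ and $\varepsilon^2 m^2 \le \rho^2$, this is bounded by $\rho^2 (M^2 e^{M^2}/2 + 1)/\cdots$ — here the constant $C(M) = 1 + M^2 e^{M^2}/2$ appears, matching the statement. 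Taking the product over $n$ i.i.d. observations turns the single-observation bound $1 + t$ into $(1+t)^n \le e^{nt}$, and averaging over the $d$ coordinate choices (with the diagonal $j=j'$ and off-diagonal $j\neq j'$ terms separated) produces the factor $1 + d\,[\exp(n\,C(M)\rho^2) - 1]$-type expression.

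The final step is to convert the chi-square bound into a lower bound on the testing error. I would use the standard inequality relating the total variation (or the sum of errors $\alpha + \beta$) to the $L^2$-norm of the likelihood ratio: for any level-$\alpha$ test $\Psi_\alpha$, the Bayesian second-kind error under the prior is bounded below as $\overline{\mathbb{P}}(\Psi_\alpha = 0) \ge 1 - \alpha - \tfrac12\sqrt{\chi^2(\overline{\mathbb{P}},\mathbb{P}_0^{\otimes n})}$, so it suffices to force $\tfrac12\sqrt{\chi^2} \le 1-\alpha-\beta$, i.e. $\chi^2 \le 4(1-\alpha-\beta)^2 = \eta(\alpha,\beta)^2$. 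Setting the chi-square bound equal to this threshold and solving for $\rho$ gives precisely the condition $n\, C(M)\, \rho^2 \le \ln[1 + d\,\eta(\alpha,\beta)^2]$, which rearranges to $\rho < \rho^\star$. Since the Bayesian error lower-bounds the worst-case error over the support of the prior (which lies in $\mathcal{F}_\infty[M]$ with $\varepsilon\|\mu\|_\infty \ge \rho$), the minimax statement follows.

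The main obstacle I anticipate is the careful bookkeeping of the chi-square cross terms once both the coordinate-randomization and the $n$-fold product are present: one must verify that the off-diagonal terms ($j \ne j'$), where the two contamination directions are orthogonal, contribute a factor that is exactly $1$ at the single-observation level (so they do not inflate the bound), while the diagonal terms ($j = j'$) carry the $e^{mm'}-1$ growth. Getting the combinatorics right so that the $d$ coordinates enter additively inside the logarithm — rather than multiplicatively — is exactly what yields the $\ln(d)$ scaling rather than a $d$ scaling, and this is the delicate point that distinguishes this bound from the $\ell_2$ case of Theorem~\ref{thm:lb1}.
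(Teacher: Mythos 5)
Your proposal is correct and follows essentially the same route as the paper's proof: a Bayesian reduction with a uniform prior over single-coordinate contaminations $\mu = r e_j$, where the second moment of the likelihood ratio splits into diagonal terms $\{1+\varepsilon^2(e^{r^2}-1)\}^n$ and off-diagonal terms exactly equal to $1$ by orthogonality, yielding the condition $n\,C(M)\,\rho^2 < \ln[1+d\,\eta(\alpha,\beta)^2]$. The only cosmetic differences (the optional random signs, and using $(1+t)^n \le e^{nt}$ where the paper uses $e^x-1>x$) do not change the argument.
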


Theorem \ref{thm:lb2} indicates that the detection condition on the parameters $(\varepsilon,\mu)$ is affected by the change of the reference norm. In particular, the dependency w.r.t. the dimension of the data is in this context of order $\sqrt{\ln(d)}$ which makes in some sense the detection problem easier than the one considered in Section \ref{ss:lb1}.

\section{Upper bounds}
\label{s:ub}

In Section \ref{s:lb}, we have proposed lower bounds on the separation region for the testing problem (\ref{eq:pb}). In particular, we have proved that in some specific cases, related to the value of the parameters $(\varepsilon,\mu)$, testing is impossible, i.e. every level-$\alpha$ tests will be associated to a second kind error greater than a prescribed level $\beta$. 

The aim of this section is to complete this discussion with upper bounds on the separation region. We propose three different testing procedures and investigate their related performances. In particular, we prove that these procedures reach the lower bounds presented above for both alternatives $\mathcal{F}_2[M]$ and $\mathcal{F}_\infty[M]$.  

\subsection{First testing procedure for the alternative class $\F_2[M]$}
\label{ss:ub1}

In a first time, we propose a procedure in the case where the alternative is expressed through the $l_2$-norm of $\mu$.
This procedure is based on the fluctuations of the empirical mean of the data.  Intuitively, $\E_f[X] = \varepsilon \mu$ for all random vectors having density $f \in \F_2[M]$, while $\mathbb{E}[X]=0$ under $H_0$. In particular, if the empirical mean of the sample has a large norm, there is a chance that the data have been driven w.r.t. a density $f$ that belongs to  $\F_2[M]$.\\

More precisely, given $\underline{X}=(X_1,\dots, X_n)$, set $\bar X_n =  \frac 1 n \underset{i=1}{\stackrel{n}{\sum}} X_i$. Let $\upsilon_\alpha$ denote the $(1-\alpha)$ quantile of a chi-squared distribution with $d$ degrees of freedom and define the test $\Psi_{1,\alpha}$ as
\begin{equation}
\Psi_{1,\alpha} = \mathds{1}_{\lbrace \|\sqrt{n} \bar X_n\|^2 > \upsilon_\alpha \rbrace}.
\label{eq:test1}
\end{equation}
The following theorem investigates the performances of this test. 

\begin{thm}\label{thm:ub1}
Let $\alpha, \beta \in]0,1[$ be fixed. Then, the testing procedure $\Psi_{1,\alpha}$ introduced in (\ref{eq:test1}) is of level $\alpha$. Moreover, there exists a positive constant $C(\alpha,\beta,M)$ depending only on $\alpha$, $\beta$ and $M$ such that
$$
	\underset{\underset{\varepsilon \|\mu\| \geq \rho}{f\in \F_2[M]}}{\sup}\ 
	\P_{f}\left(\Psi_{1,\alpha} = 0 \right) \leq \beta,
$$
for all $\rho \in \R_+^*$ such that
$$
	\rho \geq C(\alpha,\beta,M) \frac{d^{ 1/4}}{\sqrt n}.
$$
\end{thm}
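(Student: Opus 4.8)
The plan is to use the exact Gaussian structure under $H_0$ to obtain the level, and a second-moment (Chebyshev) argument under $H_1$ to control the power; writing $S := \|\sqrt n\,\bar X_n\|^2$, the whole difficulty lies in bounding the variance of this quadratic statistic for non-Gaussian mixture data. I would first settle the level. Under $H_0$ the $X_i$ are i.i.d.\ $\N_d(0_d,I_d)$, so $\sqrt n\,\bar X_n\sim\N_d(0_d,I_d)$ and $S$ follows exactly a $\chi^2_d$ law. Since $\upsilon_\alpha$ is the $(1-\alpha)$-quantile of $\chi^2_d$, we get $\P_0(\Psi_{1,\alpha}=1)=\P_0(S>\upsilon_\alpha)=\alpha$, so $\Psi_{1,\alpha}$ is of level $\alpha$ (in fact of size exactly $\alpha$).

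For the second kind error, fix $f=f_{(\varepsilon,\mu)}\in\F_2[M]$ with $\varepsilon\|\mu\|\ge\rho$ and set $t:=\varepsilon\|\mu\|$. Since $\E_f[X_1]=\varepsilon\mu$ and $\Var_f(X_1)=I_d+\varepsilon(1-\varepsilon)\mu\mu^\top=:\Sigma$, a direct computation gives
\begin{equation*}
\E_f[S] = d + n t^2 + \varepsilon(1-\varepsilon)\|\mu\|^2 \ge d + n t^2 .
\end{equation*}
A standard $\chi^2_d$ tail bound (Laurent--Massart) yields $\upsilon_\alpha\le d+c(\alpha)\sqrt d$ for an explicit $c(\alpha)$, so the mean of $S$ under $H_1$ exceeds the threshold by at least $nt^2-c(\alpha)\sqrt d$. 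The idea is then to apply Chebyshev's inequality to the lower tail,
\begin{equation*}
\P_f(\Psi_{1,\alpha}=0)=\P_f(S\le\upsilon_\alpha)\le\frac{\Var_f(S)}{\bigl(\E_f[S]-\upsilon_\alpha\bigr)^2},
\end{equation*}
valid as soon as $\E_f[S]>\upsilon_\alpha$.

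The core of the argument, and where I expect the real work, is the variance bound. Centering $W_i:=X_i-\varepsilon\mu$ and writing $V:=\sqrt n\,\bar W_n$, I would expand $S=\|V\|^2+2\sqrt n\,\varepsilon\langle V,\mu\rangle+nt^2$ and use $\Var_f(S)\le 2\Var_f(\|V\|^2)+8n\varepsilon^2\,\mu^\top\Sigma\mu$. The linear-form term is harmless, since $\mu^\top\Sigma\mu=\|\mu\|^2+\varepsilon(1-\varepsilon)\|\mu\|^4\le M^2(1+M^2/4)$ gives $8n\varepsilon^2\mu^\top\Sigma\mu\le C(M)\,nt^2$. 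For $\Var_f(\|V\|^2)$ I would split $\|V\|^2=\tfrac1n\sum_i\|W_i\|^2+\tfrac2n\sum_{i<i'}\langle W_i,W_{i'}\rangle$; the two pieces are uncorrelated (the cross-covariance vanishes because $\E_f[W_i]=0$), the diagonal part contributes $\tfrac1n\Var_f(\|W_1\|^2)$, and the off-diagonal part is a degenerate $V$-statistic with pairwise uncorrelated summands, contributing $\tfrac{2(n-1)}{n}\mathrm{tr}(\Sigma^2)$. Evaluating the mixture moments shows $\Var_f(\|W_1\|^2)=2d+O_M(1)$ and $\mathrm{tr}(\Sigma^2)=d+O_M(1)$, whence $\Var_f(\|V\|^2)\le C(M)\,d$ and finally $\Var_f(S)\le C(M)\,(d+nt^2)$. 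These mixture moment evaluations are the main obstacle: they are elementary but require care, since the data are not Gaussian, so one cannot simply invoke the identity $\Var(\|Z\|^2)=2\,\mathrm{tr}(\Sigma^2)$ and must control the extra kurtosis terms uniformly in $\varepsilon$ and in $\|\mu\|\le M$.

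It then remains to plug these estimates into the Chebyshev bound. Under $\rho\ge C\,d^{1/4}/\sqrt n$, i.e.\ $nt^2\ge n\rho^2\ge C^2\sqrt d\ge C^2$, one has $\sqrt d\le nt^2/C^2$ and $d\le(nt^2)^2/C^4$; taking $C$ large enough forces $c(\alpha)\sqrt d\le nt^2/2$, so that $\E_f[S]-\upsilon_\alpha\ge nt^2/2$. The bound then becomes $C(M)(d+nt^2)/(nt^2/2)^2$, which is at most $C(M)\bigl(16/C^4+4/C^2\bigr)$ up to the explicit constants above and can be made smaller than $\beta$ by a final choice $C=C(\alpha,\beta,M)$. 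Since this bound is non-increasing in $t$ for $t\ge\rho$, its supremum over all admissible $f$ (those with $\varepsilon\|\mu\|=t\ge\rho$) is attained at $t=\rho$, which yields the announced uniform second kind error control and completes the proof.
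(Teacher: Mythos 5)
Your proof is correct, but it follows a genuinely different route from the paper's. The paper exploits the exact distributional structure: writing $X_i=V_i\mu+\eta_i$ with $V_i\sim\mathcal{B}(\varepsilon)$, it represents $\sqrt n\,\bar X_n=\frac{S}{\sqrt n}\mu+B$ with $S\sim\mathcal{B}(n,\varepsilon)$ binomial and $B\sim\N_d(0_d,I_d)$ independent, so that conditionally on $S$ the statistic is a noncentral $\chi^2_d$ with noncentrality $\lambda_S=\|S\mu/\sqrt n\|^2$; it then invokes Birg\'e's lower-deviation bound for noncentral chi-squared variables (Lemma~\ref{LemmaLaurent}) to get a conditional bound at level $\beta/2$, and reduces the remaining event $\{h_S\le \upsilon_\alpha\}$ to $\{\sqrt{\lambda_S}\le C(\alpha,\beta)d^{1/4}\}$, controlled by Chebyshev applied only to the scalar binomial $S$. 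You instead run a pure second-moment argument on the statistic itself: computing $\E_f[S]\ge d+nt^2$, bounding $\Var_f(S)\le C(M)(d+nt^2)$ via the decomposition into diagonal, off-diagonal (degenerate, pairwise-uncorrelated) and linear terms, and applying Chebyshev once; your moment computations (the vanishing cross-covariances, $\mathrm{tr}(\Sigma^2)=d+O_M(1)$, $\Var_f(\|W_1\|^2)=2d+O_M(1)$) all check out, as does the final balancing of $nt^2$ against $c(\alpha)\sqrt d$. What the paper's route buys is the use of off-the-shelf concentration lemmas and exponential conditional tails, with all the mixture randomness compressed into one scalar variable; what your route buys is self-containedness (no noncentral chi-squared machinery, only Chebyshev and elementary moments) and robustness --- it would survive essentially unchanged if the noise were non-Gaussian with matching second and fourth moments --- at the cost of the more delicate moment bookkeeping you correctly identified as the crux. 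Both yield the same $d^{1/4}/\sqrt n$ separation rate with a constant depending only on $(\alpha,\beta,M)$.
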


The above result indicates that the test $\Psi_{1,\alpha}$ is powerful as soon as $f \in \mathcal{F}_2[M]$ with $\varepsilon \|\mu\| \gtrsim d^{1/4} / \sqrt{n}$. According to the lower bound displayed in Theorem~\ref{thm:lb1}, it appears that the minimax detection frontier is of order $d^{1/4}/ \sqrt{n}$ up to a constant, i.e. there exist $\mathcal{C}_-$ and $\mathcal{C}_+$ such that
\begin{itemize}
\item the hypotheses $H_0$ and $H_1$ cannot be separated if $\varepsilon \|\mu\| \leq \mathcal{C}_- d^{1/4} / \sqrt{n} $,
\item there exists a level-$\alpha$ powerful test as soon as $\varepsilon \|\mu\| \geq \mathcal{C}_+ d^{1/4} / \sqrt{n} $.
\end{itemize}

These two assertions together provide the first part of Theorem \ref{thm:main}. We stress that we do not investigate the value of the optimal constant associated to this separation problem ($\mathcal{C}_-$ and $\mathcal{C}_+$ do not match). Such a study indeed requires advanced asymptotic tools \citep[see e.g.][]{IS_2003} and is outside the scope of the paper.

\subsection{Second testing procedure for the alternative class $\F_2[M]$}
\label{ss:ub2}
In a unidimensional context, \cite{LLM2014} have introduced a testing procedure based on the ordered statistics. Such variables can indeed provide valuable informations on the deviation of the sample w.r.t. a benchmark distribution (e.g. under $H_0$). Although the ordered statistics of the sample are not clearly defined in a multidimensional setting, we can still project the data on a given axis and apply the procedure detailed in  \cite{LLM2014}.\\

To this end, we split the sample $\underline{X}$ in two different parts $\underline{A}=(A_1,\dots,A_{n/2})$ and $\underline{Y}=(Y_1,\dots, Y_{n/2})$. 
For the sake of convenience, we assume without loss of generality that $n$ is even and write $n/2=n$ in the sequel. Set
\begin{equation}
v_n = \frac{\bar A_n}{\|\bar A_n \|} \ \mathrm{where} \ \bar A_n = \frac{1}{n} \sum_{i=1}^n A_i.
\label{eq:barA_n}
\end{equation}
The axis generated by the vector $v_n$ indicates the direction of the empirical mean of the sample $\underline{A}$. Under $H_1$, it provides an information on the direction where the contamination in the sample has occurred. Then, we project the remaining data $\underline{Y}$ on this axis. To this end, define
$$ Z_i ^{(v_n)}= \langle Y_i , v_n \rangle  \quad \forall i\in \lbrace 1,\dots, n \rbrace.$$
The corresponding sample $\underline{Z} ^{(v_n)}= (Z_1^{(v_n)},\dots, Z_n^{(v_n)})$ is unidimensional
and we remark that under $H_0$, $Y_i \sim \mathcal{N}_d(0_d,I_d)$, hence conditionally on $v_n$, $Z_1^{(v_n)},\dots, Z_n^{(v_n)}$ are i.i.d. standard Gaussian variables 
since $\| v_n \|=1$. Therefore, we can apply the test introduced in \cite{LLM2014}. Recall that this test is based on the ordered statistics $Z_{(1)} \leq \dots \leq Z_{(n)}$ of an i.i.d. sample $(Z_1,\dots, Z_n)$ of standard Gaussian variables. More formally, assume that $n\geq 2$ and consider the subset $\K_n$  of $\{1, 2, \ldots, n/2 \}$ defined by
\begin{equation}
 \K_n =\{2^j, 0\leq j\leq \cro{\log_2(n/2)} \}.
 \label{def:Kn}
 \end{equation}
The test statistics is then defined as
\begin{equation}
\Psi_{2,\alpha} = \sup_{k\in \K_n} \mathds{1}_{\lbrace Z^{(v_n)}_{(n-k+1)}> q_{\alpha_n,k} \rbrace},
\label{eq:test2}
\end{equation}
where for all $u\in]0,1[$, $q_{u,k}$ denotes the $(1-u)$ quantile of $Z_{(n-k+1)}$  and
\begin{equation}
\alpha_n = \sup\ac{u\in]0,1[,\ \P_0\pa{\exists k\in\K_n, Z_{(n-k+1)} < q_{u,k}}\leq \alpha}.
\label{def:alphan}
\end{equation}
In some sense, we proceed to a multiple testing approach. The subset $\mathcal{K}_n$ is related to the different orders that are included in the detection process: we do not use the whole ordered sample in order to enhance the performances of our test. The term $\alpha_n$ then corresponds to a correction of the level of each individual test  $\mathds{1}_{\lbrace Z^{(v_n)}_{(n-k+1)}> q_{\alpha,k} \rbrace}$ that guarantees a final first kind error of level $\alpha$. The quantiles $q_{\alpha,k}$ can be explicitly computed. We refer to \cite{LLM2014} for more details regarding the construction of this testing procedure. 

Theorem \ref{thm:ub2} enhances the performances of the test $\Psi_{2,\alpha}$.

\begin{thm}
\label{thm:ub2}
Let $\alpha, \beta \in]0,1[$ be fixed. Then, the testing procedure $\Psi_{2,\alpha}$ introduced in (\ref{eq:test2}) is of level $\alpha$. Moreover, there exists a positive constant $C(\alpha,\beta,M)$ depending only on $\alpha$, $\beta$ and $M$ such that
$$
	\underset{\underset{\varepsilon \|\mu\| \geq \rho}{f\in \F_2[M]}}{\sup}\ 
	\P_{f}\left(\Psi_{2,\alpha}  = 0 \right) \leq \beta,
$$
for all $\rho \in \R_+^*$ such that
$$
	\rho \geq \rho^\dagger := C(\alpha,\beta,M) \frac{d^{ 1/4}}{\sqrt n}\sqrt{\ln \ln n}
$$
provided that 
$$ {n} \varepsilon \geq C(\alpha,\beta,M). $$
\end{thm}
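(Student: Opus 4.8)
The plan is to reduce the multidimensional test to the univariate mixture detection problem solved in \cite{LLM2014}. The sample splitting is essential here: $v_n$ depends only on $\underline A$, and $\underline Y$ is independent of $\underline A$, so conditionally on $\underline A$ the projected variables $Z_i^{(v_n)}=\langle Y_i,v_n\rangle$ are i.i.d. Under $H_1$ each $Y_i$ is drawn from $(1-\varepsilon)\N_d(0_d,I_d)+\varepsilon\N_d(\mu,I_d)$, hence, since $\|v_n\|=1$, conditionally on $v_n$ the sample $\underline Z^{(v_n)}$ is an i.i.d. univariate sample from $(1-\varepsilon)\N(0,1)+\varepsilon\N(a_n,1)$ with shift $a_n:=\langle\mu,v_n\rangle$ and proportion $\varepsilon$. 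The level claim is then immediate: under $H_0$ the same conditioning makes $Z_1^{(v_n)},\dots,Z_n^{(v_n)}$ i.i.d. standard Gaussian, so the calibration of $\alpha_n$ and of the quantiles $q_{\alpha_n,k}$ borrowed from \cite{LLM2014} gives $\P_0(\Psi_{2,\alpha}=1\mid v_n)\le\alpha$, and integration over $v_n$ preserves the level.

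For the second kind error I would work conditionally on $\underline A$ and invoke the univariate power bound of \cite{LLM2014}: in the dense regime $n\varepsilon\ge C(\alpha,\beta,M)$, the order-statistics test has conditional second kind error at most $\beta/2$ as soon as $\varepsilon a_n\ge C\sqrt{\ln\ln n}/\sqrt n$, where the $\sqrt{\ln\ln n}$ factor is precisely the price paid for the supremum over $\K_n$ (whose cardinality is of order $\log_2 n$). Writing $\P_f(\Psi_{2,\alpha}=0)=\E_{\underline A}[\,\P_f(\Psi_{2,\alpha}=0\mid\underline A)\,]$ and introducing the event $\mathcal E=\{\varepsilon a_n\ge C\sqrt{\ln\ln n}/\sqrt n\}$, which is measurable with respect to $\underline A$, it suffices to combine the conditional bound $\beta/2$ on $\mathcal E$ with $\P_f(\mathcal E^c)\le\beta/2$ to conclude $\P_f(\Psi_{2,\alpha}=0)\le\beta$.

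The heart of the argument is thus the high-probability lower bound on $a_n=\langle\mu,v_n\rangle=\langle\mu,\bar A_n\rangle/\|\bar A_n\|$, which I would obtain by controlling numerator and denominator separately. Since $\E_f[\bar A_n]=\varepsilon\mu$, writing $\bar A_n=\varepsilon\mu+W$ gives $\langle\mu,\bar A_n\rangle=\varepsilon\|\mu\|^2+\langle\mu,W\rangle$ with $\langle\mu,W\rangle$ centred of variance of order $\|\mu\|^2/n$; the sub-Gaussian concentration of this average yields $\langle\mu,\bar A_n\rangle\ge\tfrac12\varepsilon\|\mu\|^2$ with high probability, using $\sqrt n\,\varepsilon\|\mu\|\gg1$ under the separation hypothesis. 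For the denominator, $\|\bar A_n\|^2=\varepsilon^2\|\mu\|^2+2\varepsilon\langle\mu,W\rangle+\|W\|^2$ concentrates around $\varepsilon^2\|\mu\|^2+d/n$ through the concentration of the chi-square-type quantity $\|W\|^2$, so that $\|\bar A_n\|\le 2\sqrt{\varepsilon^2\|\mu\|^2+d/n}$ with high probability. Combining the two estimates gives $\varepsilon a_n\gtrsim\varepsilon^2\|\mu\|^2/\sqrt{\varepsilon^2\|\mu\|^2+d/n}$, and the hypothesis $\varepsilon\|\mu\|\ge C\,d^{1/4}\sqrt{\ln\ln n}/\sqrt n$ makes the right-hand side at least of order $\sqrt{\ln\ln n}/\sqrt n$ in each of the two regimes $\varepsilon^2\|\mu\|^2\le d/n$ and $\varepsilon^2\|\mu\|^2> d/n$ (with room to spare, which is why the stated rate is a clean sufficient condition rather than a sharp one).

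The step I expect to be most delicate is exactly this control of the direction estimate in the high-dimensional regime $\varepsilon\|\mu\|\ll\sqrt{d/n}$, where $v_n$ is a \emph{poor} estimator of $\mu/\|\mu\|$: the signal-to-noise ratio in $\bar A_n$ decays like $d^{-1/4}$, so one cannot simply argue $a_n\approx\|\mu\|$ and must instead keep track of the exact $d^{1/4}$ dependence arising from $\|\bar A_n\|\approx\sqrt{d/n}$, checking that the effective univariate signal $\varepsilon a_n\approx\varepsilon^2\|\mu\|^2\sqrt{n/d}$ still clears the univariate threshold. The requisite concentration inequalities for $\|W\|^2$ and $\langle\mu,W\rangle$ \emph{under the mixture} (not merely under $H_0$) are the ones gathered in the Appendix, and they form the technical backbone of this last step.
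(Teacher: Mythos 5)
Your proposal is correct and follows essentially the same route as the paper's proof: sample splitting plus conditioning on $v_n$ to reduce to the univariate order-statistics test, the decomposition $\P_f(\Psi_{2,\alpha}=0)\le \beta/2+\P_f(\mathcal{E}^c)$ over the good-direction event, and the signal-plus-noise representation $\bar A_n=(S/n)\mu+U/\sqrt n$ with $S\sim\mathcal{B}(n,\varepsilon)$ and $U\sim\N_d(0_d,I_d)$. The only, immaterial, difference lies in the last concentration step: the paper bounds $\|\sqrt n\,\bar A_n\|^2$ via the noncentral chi-squared deviation inequality conditionally on $S$ (after localizing $S\in[n\varepsilon/2,3n\varepsilon/2]$, which is where its condition $n\varepsilon\geq C$ enters) and solves a quadratic inequality in $\sqrt n\,\varepsilon\|\mu\|$, whereas you control the numerator $\langle\mu,\bar A_n\rangle$ and the denominator $\|\bar A_n\|$ separately and conclude by the two-regime comparison of $\varepsilon^2\|\mu\|^2$ with $d/n$, which leads to the same $d^{1/4}\sqrt{\ln\ln n}/\sqrt n$ threshold.
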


The main conclusion of the above result is that the test $\Psi_{2,\alpha}$ based on the ordered statistics has a maximal second  kind error bounded by $\beta$ as soon as $\rho \geq \rho^\dagger$. Hence, $\Psi_{2,\alpha}$ exhibits essentially the same level of performances than $\Psi_{1,\alpha}$. 
The only difference with the bound displayed in Theorem \ref{thm:ub1} is related to an additional log-term ($\sqrt{\ln \ln n}$). Indeed, we proceed to a multiple testing procedure since we consider several indices for the ordered statistics. This log-term corresponds to the price to pay for such a construction. However, we stress that it could be removed provided that the upper bound $M$ on $\|\mu\|$ is known. 

This testing procedure has been included in the present study since we expect some robustness properties w.r.t. futur extensions of this work (see Section \ref{s:conclusion} for an extended discussion).

\subsection{A testing procedure for the alternative class $\mathcal{F}_\infty[M]$}
\label{ss:ub3}

In the previous section, we have applied the procedure proposed in \cite{LLM2014} on the projection of the data on a given axis (generated by the empirical mean of the sample). Here, we alternatively consider a projection on the canonical axis. In other word, we apply the procedure of \cite{LLM2014} on each canonical direction in order to detect a possible contamination. As summarized in Theorem \ref{thm:ub3} below, this approach appears to be convenient when dealing with the alternative class $\mathcal{F}_\infty[M]$.

Let $(e_1,\ldots,e_d)$ be the canonical basis of $\R^d$. In the following, we set
$$Z_{ij} : = \langle X_i, e_j \rangle \quad \forall i\in \lbrace 1,\dots, n \rbrace, \ j\in \lbrace 1,\dots d \rbrace. $$ 
Then, for a given $j\in \lbrace 1, \dots, d \rbrace$, we can remark that $(Z_{i,j})_{i=1\dots n}$ is a unidimensional sample and we denote by $(Z_{(i),j})_{i=1\dots n}$ its associated ordered statistics. Then, for a given level $\alpha\in]0,1[$, we set 
$$
	T_{\alpha,j}^+  := \sup_{k \in \K_n} \left\{  \mathds{1}_{Z_{(n-k+1),j} > q_{\alpha_n,k}}  \right\},
$$
and
$$
	T_{\alpha,j}^-  := \sup_{k \in \K_n} \left\{  \mathds{1}_{Z_{(k),j} < - q_{\alpha_n,k}}  \right\},
$$
where $\K_n$, $\alpha_n$ and $q_{\alpha,k}$ are defined as in Equations~\eqref{def:Kn} and \eqref{def:alphan}.  
Then, we consider the following test statistics
\begin{equation}
\label{def:testdirection}
\Psi_{3,\alpha} = \underset{1\leq j \leq d}{\sup}\ \max\left(T_{\frac{\alpha}{2d},j}^+,T_{\frac{\alpha}{2d},j}^-\right).
\end{equation}
The following theorem provides a control on the first and second kind errors of the test $\Psi_{3,\alpha}$ when the alternative is measured via the infinite norm.

\begin{thm}
\label{thm:ub3}
Let $\alpha, \beta \in ]0,1[$ be fixed. Then, the test $\Psi_{3,\alpha}$ introduced in
\eqref{def:testdirection} is of level $\alpha$. Moreover, if $n \geq 3$ and
$$8.25\times \frac{\ln\cro{4 d \log_2(n/2)/\alpha}}{n} \leq \int_M^{+\infty} \phi_1(x) dx,$$
then, there exists a positive constant $ C(\beta,M) $ depending only on $\beta$ and M, such that
$$ \sup_{\underset{ \varepsilon \|\mu\|_\infty \geq \rho}{f\in\F_\infty[M]}}  \P_f(\Psi_{3,\alpha}=0)\leq \beta, $$
as soon as
\begin{equation}\label{Conddensedirection}
\rho \geq C(\beta,M) \times \sqrt{\frac{\ln\ln(n) \ln(d/\alpha)}{n}}.
\end{equation}
\end{thm}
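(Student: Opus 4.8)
The plan is to prove Theorem~\ref{thm:ub3} by reducing the multidimensional problem to $d$ parallel applications of the unidimensional order-statistics test of \cite{LLM2014}, one on each canonical coordinate. First I would handle the level. Under $H_0$ each $X_i\sim\N_d(0_d,I_d)$, so the coordinate samples $(Z_{ij})_{i=1\dots n}$ are i.i.d. standard Gaussian for every fixed $j$, exactly the setting in which $\alpha_n$ in \eqref{def:alphan} was calibrated. By construction, the individual two-sided test $\max(T_{u,j}^+,T_{u,j}^-)$ at level $u$ has first-kind error at most $u$ (using symmetry of the Gaussian to control the lower-tail statistic $T_{u,j}^-$ by the same quantiles $q_{u,k}$). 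Choosing $u=\alpha/(2d)$ and applying a union bound over the $d$ coordinates and the two sides gives $\P_0(\Psi_{3,\alpha}=1)\le 2d\cdot\alpha/(2d)=\alpha$, so the test is of level $\alpha$.

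For the power, the key reduction is that $\|\mu\|_\infty\ge\rho/\varepsilon$ forces at least one coordinate $j^\star$ with $|\mu_{j^\star}|=\|\mu\|_\infty$, and on that coordinate the projected sample $(Z_{i,j^\star})_{i=1\dots n}$ is an i.i.d. sample from the \emph{unidimensional} two-component mixture $(1-\varepsilon)\phi_1+\varepsilon\phi_1(\cdot-\mu_{j^\star})$. It therefore suffices to make the one-sided statistic $T_{\alpha/(2d),j^\star}^+$ (or $T^-$, according to the sign of $\mu_{j^\star}$) equal to $1$ with probability at least $1-\beta$, since $\Psi_{3,\alpha}\ge\max(T_{\alpha/(2d),j^\star}^+,T_{\alpha/(2d),j^\star}^-)$. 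The plan is to invoke the power analysis of \cite{LLM2014} for the one-dimensional order-statistics test applied at level $\alpha/(2d)$ on this single coordinate. The level appears inside the quantiles $q_{\alpha_n,k}$ only logarithmically, which is precisely what produces the $\ln(d/\alpha)$ factor in \eqref{Conddensedirection}, while the multiple-index structure over $\K_n$ (whose cardinality is of order $\log_2(n/2)$) contributes the $\ln\ln(n)$ factor, exactly as in Theorem~\ref{thm:ub2}.

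Concretely, I would show that under the separation condition $\varepsilon|\mu_{j^\star}|\ge C(\beta,M)\sqrt{\ln\ln(n)\ln(d/\alpha)/n}$ there exists some $k\in\K_n$ for which the observed upper order statistic $Z_{(n-k+1),j^\star}$ exceeds $q_{\alpha_n,k}$ with high probability. The heuristic is that the $\varepsilon$-fraction of contaminated observations shifts roughly $n\varepsilon$ points upward by $\mu_{j^\star}$, which inflates the top order statistics beyond the null quantiles; one picks the scale $k$ comparable to $n\varepsilon$ (so that $k\in\K_n$ up to a dyadic rounding) and controls the relevant binomial-type fluctuation of the number of exceedances. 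The technical conditions $n\ge 3$ and $8.25\,\ln[4d\log_2(n/2)/\alpha]/n\le\int_M^{+\infty}\phi_1(x)\,dx$ are exactly what guarantee that the corrected level $\alpha_n$ remains bounded away from degeneracy and that the chosen quantile threshold sits below $M$, so that the shifted mass genuinely crosses it.

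The main obstacle will be the power step on the single coordinate $j^\star$, namely transferring the one-dimensional guarantees of \cite{LLM2014} while tracking the dependence on the \emph{reduced} level $\alpha/(2d)$. One must verify that the constant $C(\beta,M)$ can be kept free of $d$ and $\alpha$, with the entire $d$- and $\alpha$-dependence isolated in the explicit $\sqrt{\ln(d/\alpha)}$ factor; this requires examining how $q_{\alpha_n,k}$ grows as the level shrinks and confirming that the Gaussian tail bound on the contaminated coordinate still clears the threshold. A secondary subtlety is the passage from $\alpha_n$ (the true corrected level) back to $\alpha/(2d)$: since $\alpha_n\ge\alpha/(2d)$ by the union-bound calibration, the quantiles $q_{\alpha_n,k}$ are no larger than $q_{\alpha/(2d),k}$, so controlling the power at the nominal level $\alpha/(2d)$ is sufficient and in fact conservative. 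The remaining computations — the binomial deviation bound for the number of contaminated exceedances and the explicit quantile estimates — are routine given the one-dimensional machinery and the auxiliary lemmas collected in Appendix~\ref{s:append:1}.
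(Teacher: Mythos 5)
Your overall architecture is the same as the paper's: the level is obtained by a union bound over the $2d$ one-sided statistics, and the power by selecting a coordinate $j_0$ with $|\mu_{j_0}|=\|\mu\|_\infty$, observing that the projected sample is a unidimensional mixture $(1-\varepsilon)\phi_1+\varepsilon\phi_1(\cdot-\mu_{j_0})$, invoking the unidimensional order-statistics analysis (Lemma~\ref{lem:testunid} in Appendix~B, i.e.\ the machinery of \cite{LLM2014}) at level $\alpha/(2d)$, and concluding via $\P_f(\Psi_{3,\alpha}=0)\le \P_f\pa{T^+_{\alpha/(2d),j_0}=0}\wedge\P_f\pa{T^-_{\alpha/(2d),j_0}=0}$. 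Up to that point your proposal matches the paper.

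However, your concrete plan for the key power step would fail. You propose to take the dyadic index $k$ ``comparable to $n\varepsilon$''; that is the sparse-regime heuristic, and it does not work in this bounded-mean dense regime. Near the detection boundary one has $\varepsilon\asymp\sqrt{\ln\ln(n)\,\ln(d/\alpha)/n}\,/M$, so with $k\asymp n\varepsilon$ the threshold $t_{\alpha_n,k}\approx\bar\Phi^{-1}(k/n)$ diverges with $n$ (writing $\bar\Phi(x)=\int_x^{+\infty}\phi_1(u)du$), while the shift is bounded by $M$; the excess exceedance mass $n\varepsilon\cro{\bar\Phi(t_{\alpha_n,k}-\tau)-\bar\Phi(t_{\alpha_n,k})}$ is then far too small relative to the fluctuation $\sqrt{k\ln(1/\alpha_n)}$, and the rate \eqref{Conddensedirection} is not attained. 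The paper's Lemma~\ref{lem:testunid} instead fixes $k$ so that $k/n\in[0.99\,\bar\Phi(M)/2,\,0.99\,\bar\Phi(M)]$, a constant fraction of $n$ depending only on $M$, which pins the interval $[t_{\alpha_n,k}-\tau,\,t_{\alpha_n,k}]$ inside a compact set $[c_1(M),c_2(M)]$ on which $\phi_1$ is bounded below; the signal term is then $n\varepsilon\tau\phi_1(a)\gtrsim n\varepsilon\tau\phi_1(c_2(M))$, which dominates the $\sqrt{k\ln(1/\alpha_n)}$ fluctuation exactly under \eqref{Conddensedirection}. This is also the true role of the hypothesis $8.25\ln\cro{4d\log_2(n/2)/\alpha}/n\le\bar\Phi(M)$: it guarantees that such a dyadic $k$ exists in $\K_n$ and that $t_{\alpha_n,k}\le c_2(M)=\bar\Phi^{-1}(\bar\Phi(M)/200)$; note the threshold sits slightly \emph{above} $M$, not ``below $M$'' as you state. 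A second, related error: your inequality $\alpha_n\ge\alpha/(2d)$ is backwards. Since the family-wise error over $\K_n$ dominates each individual error, one has $\alpha_n\le\alpha/(2d)$ and hence $q_{\alpha_n,k}\ge q_{\alpha/(2d),k}$: the multiplicity correction makes the thresholds \emph{larger}, so controlling the power at the nominal level $\alpha/(2d)$ is not sufficient. What the proof actually uses is the lower bound $\alpha_n\ge \alpha/(2d\,|\K_n|)\gtrsim \alpha/(2d\log_2(n/2))$, and it is precisely this bound, fed into the quantile estimate $t_{\alpha_n,k}$, that produces the $\ln\ln(n)$ and $\ln(d/\alpha)$ factors in \eqref{Conddensedirection}.
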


Theorem \ref{thm:ub3}, together with Theorem \ref{thm:lb2} allow to characterize the separation frontier for the testing problem (\ref{eq:pb}) when the energy (norm of $\mu$) is measured w.r.t. the infinite norm. As discussed in Section \ref{s:lb}, the problem appears to be easier in this setting as the dimension of the data grows: the price to pay is a term of order $\sqrt{\ln(d)}$ (against $d^{1/4}$ with the $l_2$-norm). 

By the way, the construction of the test $\Psi_{3,\alpha}$ highlights the presence of this log-term: we proceed to $2d$ different tests (for each dimension, in the directions $e_j$ and $-e_j$), and reject $H_0$ as soon as one of these tests detects something. This exactly corresponds to a multiple testing approach. The price to pay relies in the Bonferroni correction in each test: $\alpha$ is replaced by $\alpha/2d$, which implies the presence of a $\sqrt{\ln(d)}$ term in the separation condition.

\section{Discussion}
\label{s:conclusion}

In our opinion, the main contribution of this paper is a sharp characterization of the role played by the dimension in a two-component mixture detection context. As discussed above, the price to pay in a multidimensional setting is a term of order $d^{1/4}$ (resp. $\sqrt{\ln(d)}$) when the energy in the alternative is measured w.r.t. the $l_2$-norm (resp. $l_\infty$-norm). At this step, several questions are still open and provide possible outcomes for futur investigations. 

First of all, according to the classical denomination in the statistical literature, our investigations have been drawn in a \textit{dense} regime. Indeed, the proportion parameter $\varepsilon$ is not allowed to be (asymptotically) smaller than $1/\sqrt{n}$. On the other hand, when $d=1$, several analyses have been conducted in a so-called \textit{sparse} regime, i.e. when $\varepsilon \ll 1/\sqrt{n}$. In this context, it could be challenging to investigate the testing problem (\ref{eq:pb}) in a \textit{sparse} context and to precisely determine the influence of the dimension $d$ on the problem. By the way, it seems necessary to propose a procedure that will be convenient for any considered regime, i.e. in some sense adaptive to the asymptotic of parameter $\varepsilon$. 

Several additional investigations could be driven in this setting, among them: considering more general benchmark distributions (i.e. different from the standard Gaussian distribution), heteroscedastic mixtures or taking into account some uncertainty on the reference distribution. All these questions are outside the scope of the paper but could be at the core of future contributions.

\section{Proof of Theorems~\ref{thm:lb1} and \ref{thm:lb2}}
\label{s:plb}
For the sake of convenience, we introduce the subset $\mathcal F[\rho,M]$ which corresponds to
$$\mathcal F_2[\rho,M] = \{f\in \mathcal F_2[M]; \varepsilon \|\mu\|  \geq  \rho\}$$
in the first proof, 
and 
$$\mathcal F_\infty[\rho,M] = \{f\in \mathcal F_\infty[M]; \varepsilon \|\mu\|_\infty  \geq  \rho\}$$
in the second proof,
for any given radius $\rho>0$. 
Following \cite{IS_2003} or \cite{Yannick}, we will use a Bayesian argument in order to bound the minimax separation radius in the two contexts.  
Thus, we consider a subset $\ac{g_\omega; \omega\in \Omega}$ of $\mathcal F[\rho,M]$ which will be specify for each proof later. Then,
$$
\sup_{f\in \mathcal F[\rho,M]} \P_f(\Psi_\alpha=0)  \geq   \P_{g_\omega}(\Psi_\alpha=0), \forall \omega\in\Omega.
$$
Denoting the uniform probability measure $\pi$ on the finite set  $\Omega$, we have
\begin{eqnarray}
\sup_{f\in \mathcal F[\rho,M]} \P_f(\Psi_\alpha=0) 
& \geq & \int_\Omega \P_{g_\omega}(\Psi_\alpha=0) d\pi(\omega) := \P_\pi (\Psi_\alpha=0) . \label{eq:Bayes}
\end{eqnarray}
Using \eqref{eq:Bayes} and similar computations as in \cite{IS_2003} or \cite{Yannick}, we obtain
\begin{eqnarray*}
\underset{\Psi_\alpha}{\inf} \underset{f \in \mathcal F[\rho,M]}{\sup} \P_{f}(\Psi_\alpha=0)& \geq &\underset{\Psi_\alpha}{\inf}\  \P_{\pi}(\Psi_\alpha =0) \\
&\geq& 1 - \alpha - \frac 1 2 \sqrt{\E_0[L_{\pi}^2(\underline X)] -1},
\end{eqnarray*}
where 
$L_{\pi}(\underline X) =\frac{d\P_{\pi}}{d\P_0} (\underline X)$ is the likelihood ratio. In particular, if we can ensure that
$$
 \E_0\cro{L_{\pi}^2(\underline X)  } < 1 + \eta(\alpha,\beta) ^2,
$$
where $\eta(\alpha,\beta) = 2( 1-\alpha - \beta)$ for all $\alpha,\beta\in ]0,1[$, then
$$\underset{\Psi_\alpha}{\inf} \underset{f \in \mathcal F[\rho,M]}{\sup} \P_{f}(\Psi_\alpha=0 )  > 1 - \alpha - \frac12 \eta(\alpha,\beta)  = \beta.$$
In the two following proofs displayed below, we will specify the subset $\ac{g_\omega; \omega\in \Omega}$ and propose an upper bound for the term $\E_0[L_{\pi}^2(\underline X)]$.

\subsection{Proof of Theorem~\ref{thm:lb1}}
In this proof, recall that
$$\mathcal F[\rho,M] = \mathcal F_2[\rho,M] = \{f\in \mathcal F_2[M]; \varepsilon \|\mu\|  \geq  \rho\},$$
for any given radius $\rho>0$. 

We now consider $r \in] 0,M] $ and $ \varepsilon \in ]0,1[$ such that $ \varepsilon r = \rho$. In this context, we choose $\Omega=\{-1,1\}^d$ and 
$$
\forall \omega\in\Omega,\ g_\omega(.) = (1-\varepsilon)\phi_d(.) + \varepsilon \phi_d\pa{. - \frac{r}{\sqrt{d}}\omega} \in \mathcal F_2[\rho,M].
$$
Then, we have to propose an upper bound for the term $\E_0[L_{\pi}^2(\underline X)]$ where in this setting
\begin{eqnarray*}
L_{\pi}(\underline X) 
&= &\frac{d\P_{\pi}}{d\P_0} (\underline X)\\
&=& \frac{1}{2^d}\sum_{\omega \in \ac{-1,1}^d} 
 \prod_{i=1}^n\cro{(1-\varepsilon)   + \varepsilon \frac{\phi_d(X_i - \frac{r}{\sqrt{d}} \omega )}{\phi_d(X_i)} } \\
 &=& \frac{1}{2^d}\sum_{\omega \in \ac{-1,1}^d} \prod_{i=1}^n\cro{(1-\varepsilon)   + \varepsilon e^{-\frac{r^2}{2}} e^{\langle X_i,  \frac{r}{\sqrt{d}} \omega \rangle }}.
 \end{eqnarray*}
Thus,
\begin{eqnarray*}
L_{\pi}^2(\underline X)  
& = & \frac{1}{2^{2d}}\sum_{\omega, \tilde{\omega} \in \ac{-1,1}^d} 
 \prod_{i=1}^n \left[(1-\varepsilon)^2   + \varepsilon(1-\varepsilon) e^{-\frac{r^2}{2}} \pa{e^{\langle X_i,  \frac{r}{\sqrt{d}} \omega \rangle } + e^{\langle X_i,  \frac{r}{\sqrt{d}} \tilde{\omega} \rangle }} \right.\\
 & & \hspace{8cm} \left.+ \varepsilon^2 e^{-r^2} e^{\langle X_i,  \frac{r}{\sqrt{d}} (\omega + \tilde{\omega}) \rangle }  \right].
 \end{eqnarray*}
Since for all $\mu \in \R^d$, $ \E_0 \cro{ e^{\langle X_i,  \mu \rangle }}=e^{\|\mu\|^2/2}$, we have
 
$$
\E_0[L_{\pi}^2(\underline X)] = \frac{1}{2^{2d}}\sum_{\omega, \tilde{\omega} \in \ac{-1,1}^d}  \prod_{i=1}^n\cro{(1-\varepsilon)^2   + 2 \varepsilon(1-\varepsilon)+ 
\varepsilon^2 e^{-r^2} e^{\frac{r^2}{2d} \|\omega + \tilde{\omega} \|^2}}.$$
Noticing that $ \|\omega + \tilde{\omega} \|^2 = 2d+2 \langle \omega, \tilde{\omega} \rangle$, 
\begin{eqnarray*}
\E_0[L_{\pi}^2(\underline X)] &=& \frac{1}{2^{2d}}\sum_{\omega, \tilde{\omega} \in \ac{-1,1}^d}  \prod_{i=1}^n\cro{1 - \varepsilon^2 + \varepsilon^2 e^{\frac{r^2}{d}
\langle \omega, \tilde{\omega} \rangle}} \\
&=& \frac{1}{2^{2d}}\sum_{\omega, \tilde{\omega} \in \ac{-1,1}^d} \cro{1+  \varepsilon^2 \pa{ e^{\frac{r^2}{d}
\langle \omega, \tilde{\omega} \rangle} -1 } }^n\\
&=& \E \cro{\ac{1+ \varepsilon^2 \pa{e^{\frac{r^2}{d}\langle W, \tilde{W}\rangle}-1} }^n }, 
\end{eqnarray*} 
where $W$ and $\tilde W$ are two independent $d$-dimensional Rademacher random variables, i.e. 
$$ \P(W= w ) = \P(\tilde W = w) = \frac{1}{2^d} \quad \forall w\in \lbrace -1,1 \rbrace^d.$$
Noticing that
$$ \langle W, \tilde{W}\rangle = \sum_{j=1}^d W_j \tilde{W}_j $$
and that the variables $ W_j \tilde{W}_j$ for $1\leq j \leq d$ are also i.i.d. Rademacher random variables, $  \langle W, \tilde{W}\rangle $ has the same distribution as $Y = \sum_{j=1}^d W_j$. This leads to 
$$ \E_0[L_{\pi}^2(\underline X)] = \E \cro{ \ac{1+\varepsilon^2 \pa{e^{\frac{r^2}{d}Y }-1}}^n}.$$
We now use the following inequality which holds for any real number $u$ such that $ \ab{u} \leq M$ : 
\begin{equation} \label{ineqexp}
 \ab{e^u-1-u} \leq \frac{e^M}{2}u^2.
 \end{equation}
Since $ \ab{  r^2  \frac{Y}{d} } \leq r^2 \leq M^2$, we have
$$ e^{\frac{r^2}{d} {Y }}-1 \leq \frac{r^2}{d}{Y }+ \frac{e^{M^2}}{2}  \frac{r^4}{d^2}Y^2.$$
Hence, we have
$$  0\leq 1- \varepsilon^2  \leq 1+ \varepsilon^2 \pa{e^{\frac{r^2}{d}Y }-1} \leq  1+ \frac{\varepsilon^2 r^2}{\sqrt{d}}  \pa{\frac{Y}{\sqrt{d} }+ \frac{e^{M^2}}{2}  M^2\frac{Y^2}{d} }. $$
Setting $C(M)= 1+  {e^{M^2}} M^2 /2,$
$$  0\leq 1+ \varepsilon^2 \pa{e^{\frac{r^2}{d}Y }-1} \leq  1+ C(M) \frac{\varepsilon^2 r^2}{\sqrt{d}}  \pa{\frac{\ab{Y}}{\sqrt{d} }\vee  \frac{Y^2}{d} }, $$
which leads to
$$  \E_0[L_{\pi}^2(\underline X)]  \leq \E \cro{\ac{  1+ a\pa{\frac{\ab{Y}}{\sqrt{d} }\vee  \frac{Y^2}{d} }}^n},$$
where 
\begin{equation}
a= C(M) {\varepsilon^2 r^2}/{\sqrt{d}}.
\label{eq:a}
\end{equation}
Using the inequality $\ln(1+x)\leq x$ for all $x \geq 0$, we have
\begin{eqnarray*}
\E_0[L_{\pi}^2(\underline X)]  &\leq & \E \cro{ e^{ \ac { n a \pa{\frac{\ab{Y}}{\sqrt{d} }\vee  \frac{Y^2}{d} }}}},\\
&\leq &  e^{ { n a } } \P \pa{  \frac{\ab{Y}}{\sqrt{d} } \leq 1}+ \E \cro{ e^{ { n a   \frac{Y^2}{d}} } \un_{\left\lbrace \frac{\ab{Y}}{\sqrt{d} } >1\right\rbrace}}.
\end{eqnarray*}
Moreover, using an integration by part
\begin{eqnarray*}
 \E \cro{ e^{ { n a   \frac{Y^2}{d}} } \un_{ \left\lbrace \frac{\ab{Y}}{\sqrt{d} } >1\right\rbrace}}  &\leq&  e^{ n a }  \P \pa{  \frac{\ab{Y}}{\sqrt{d} } > 1} + \int_{e^{ n a }}^{+\infty} \P \pa { e^{ n a   \frac{Y^2}{d}} >t } dt ,
\end{eqnarray*}
leading to 
$$ \E_0[L_{\pi}^2(\underline X)]  \leq e^{ n a }   +  \int_{e^{ n a }}^{+\infty} \P \pa {e^{ n a   \frac{Y^2}{d} } >t } dt. $$
We deduce from Hoeffding's inequality that for all $x >0$,
$$ \P \pa{ \frac{\ab{Y}}{\sqrt{d} } > x } \leq 2 \exp \pa{-x^2/2} .$$
Hence, for all $t > e^{ n a }$, 
$$ \P \pa {e^{ n a   \frac{Y^2}{d} } >t } \leq 2 t^{{-1}/{2na}}.$$
In the particular case where $na <1/2$,  we get
\begin{eqnarray*}
\E_0[L_{\pi}^2(\underline X)]  &\leq & e^{ n a } + 2  \int_{e^{ n a }}^{+\infty}  t^{{-1}/{2na}} dt, \\
 &\leq & e^{ n a } \pa{1    + \frac{4 na}{1-2 na} e^{  -1/2}} \leq h(na),
\end{eqnarray*}
where the function $h(.)$ is defined as 
$$h(x) = e^{ x } \pa{1    + \frac{4 x}{1-2 x} e^{  -1/2}} \quad \forall x \in [ 0, 1/2[ .  $$
The function $h$ is non decreasing on $ [ 0, 1/2[ $. Hence 
$$ na \leq 1/4 \Rightarrow \E_0[L_{\pi}^2(\underline X)]  \leq h(1/4) \leq 3< 1 + \eta(\alpha,\beta) ^2,$$
since,  according to our assumption
$$ \alpha +\beta < 1-\frac{1}{\sqrt{2}} \simeq 0.293 \Rightarrow (1-\alpha-\beta)^2 >1/2.$$
In order to conclude the proof, just remark from (\ref{eq:a}) that 
$$na \leq 1/4 \Leftrightarrow \varepsilon^2 r^2 \leq \sqrt{d}/(4C(M) n) .$$ 
Hence, setting $ (\rho^\#)^2 =  \sqrt{d}/(4C(M) n)$, we get that if $\rho <\rho^\#$, then
$ \E_0[L_{\pi}^2(\underline X)]   < 1 +\eta(\alpha,\beta) ^2$, which leads to the desired result. 

\begin{flushright}
$\Box$
\end{flushright}

\subsection{Proof of Theorem~\ref{thm:lb2}}
In this context,
$$\mathcal F[\rho,M] = \mathcal F_\infty [\rho,M] := \{f\in \mathcal F_\infty[M]; \varepsilon \|\mu\|_\infty  \geq  \rho\},$$ 
for any $\rho>0$. Let $r \in]0,M]$ such that $\varepsilon r = \rho $. In this context, we choose 
$$\Omega=\left\lbrace  \omega\in \lbrace 0,1 \rbrace^d \ s.t. \ \sum_{j=1}^d \omega_j = 1   \right\rbrace,$$
and we define for all $\omega\in\Omega$,
$$
g_\omega(.) = (1-\varepsilon) \phi_d(.) + \varepsilon \phi_d(. - r \omega ) \in \mathcal F_\infty [\rho,M].
$$

Now, we turn our attention to the control of the associated likelihood ratio. For each $j=1,\ldots,d$, let $D^{(j)}\in\{0,1\}^d$ such that $D^{(j)}_\ell = \mathds{1}_{\ell=j}$ and 
\begin{eqnarray*}
L_{\pi}(\underline{X})
&= &\frac{d\P_{\pi}}{d\P_0}(\underline X),\\
&=& \left[\prod_{i=1}^n \phi_d(X_i)\right]^{-1}  \left[ \frac 1 d \sum_{j=1}^d  \prod_{i=1}^n \left\{(1-\varepsilon) \phi_d(X_i) + \varepsilon \phi_d(X_i - r D^{(j)})\right\}\right],\\
&=& \frac 1 d \sum_{j=1}^d  U_j(\underline X),
\end{eqnarray*}
with 
$$ U_j(\underline X) = \prod_{i=1}^n \left\{(1-\varepsilon) + \varepsilon \frac{\phi_d(X_i - r D^{(j)})}{\phi_d(X_i)}\right\}.$$
Thus
\begin{equation}
\E_0[L_{\pi}^2(\underline X)] = \frac{1}{d^2} \sum_{j=1}^d  \E_0[U_j(\underline X)^2]  + \frac{1}{d^2} \sum_{k\neq j} \E_0[U_j(\underline X)U_k(\underline X)].
\label{eq:lr1}
\end{equation}
In a first time, we can remark that for all $j\in \lbrace 1,\dots, d \rbrace$
\begin{eqnarray*}
\E_0[U_j(\underline X)^2]
&=& \E_0\left[\left( \prod_{i=1}^n \left\{(1-\varepsilon) + \varepsilon \frac{\phi_d(X_i -r D^{(j)})}{\phi_d(X_i)} \right\} \right)^2\right],\\
&=& \E_{\phi_d}\left[\left\lbrace(1-\varepsilon) + \varepsilon \frac{\phi_d(X_1 - r D^{(j)})}{\phi_d(X_1)} \right\rbrace^2 \right]^n,
\end{eqnarray*}
and
\begin{eqnarray*}
\lefteqn{\E_{\phi_d}\left[\left\{(1-\varepsilon) + \varepsilon \frac{\phi_d(X - r D^{(j)})}{\phi_d(X)} \right\}^2 \right]}\\
&=& (1-\varepsilon)^2 + \varepsilon^2 \int_{\R^d}\frac{\phi_d^2(x - r D^{(j)})}{\phi_d(x)} dx +  2(1-\varepsilon)\varepsilon \int_{\R^d} \phi_d(x - r D^{(j)}) dx,\\
&=& (1-\varepsilon)^2 + \varepsilon^2 e^{r^2} +  2(1-\varepsilon)\varepsilon, \\
&=& 1 + \varepsilon^2 (e^{r^2}  -1),
\end{eqnarray*}
since $\int_{\R^d} \frac{\phi_d^2(x-\mu)}{\phi_d(x)} dx = \exp(\|\mu\|^2)$. Thus 
$$\E_0[U_j(\underline X)^2] = \left\{1 + \varepsilon^2 (e^{r^2}  -1)\right\}^n.$$
Concerning the second sum in (\ref{eq:lr1}), we obtain for all $j,k \in \lbrace 1,\dots, d \rbrace$, $j\neq k$ 
\begin{eqnarray*}
\lefteqn{\E_0[U_j(\underline X)U_k(\underline X)]}\\
& = & \E_0\left[ \prod_{i=1}^n \left\{(1-\varepsilon) + \varepsilon \frac{\phi_d(X_i - r D^{(j)})}{\phi_d(X_i)}\right\}  \left\{(1-\varepsilon) + \varepsilon \frac{\phi_d(X_i - 	rD^{(k)})}{\phi_d(X_i)}\right\}\right] ,\\
&=&\left\{\E_{\phi_d}\left[(1-\varepsilon)^2 +  (1-\varepsilon) \varepsilon \frac{\phi_d(X_1 - r D^{(j)}) + \phi_d(X_1 - r D^{(k)})}{\phi_d(X_1)}  \right. \right. ,\\
& & \hspace{5cm} \left. \left. + \varepsilon^2  \frac{\phi_d(X_1 - r D^{(j)}) \phi_d(X_1 - r D^{(k)})}{\phi_d(X_1)^2}  \right]\right\}^n,\\
&=& \left\{(1-\varepsilon)^2 + 2 (1-\varepsilon) \varepsilon  + \varepsilon^2 \exp[r^2 \langle D^{(j)},D^{(k)} \rangle ] \right\}^n,\\
&=& \left\{(1-\varepsilon)^2 + 2 (1-\varepsilon) \varepsilon  + \varepsilon^2  \right\}^n = 1,
\end{eqnarray*}
since $\int_{\R^d} \frac{\phi_d(x-\mu_1) \phi_d(x-\mu_2)}{\phi_d(x)} dx = \exp(\langle \mu_1,\mu_2 \rangle)$. Finally,
$$\E_0[L_{\pi}^2(\underline X)] = \frac{1}{d}  \left\{1 + \varepsilon^2 (e^{r^2} -1)\right\}^n  + \frac{d(d-1)}{d^2} .$$
We obtain
\begin{eqnarray*}
\E_0[L_{\pi}^2(\underline X)] < 1 + \eta(\alpha,\beta)^2
&\Leftrightarrow & \frac{1}{d}  \left\{1 + \varepsilon^2 (e^{r^2}-1)\right\}^n  + \frac{d(d-1)}{d^2}  < 1 + \eta(\alpha,\beta)^2,\\
&\Leftrightarrow & \left\{1 + \varepsilon^2 (e^{r^2}-1)\right\}^n < 1 + d \eta(\alpha,\beta)^2,\\
&\Leftrightarrow & \varepsilon^2 (e^{r^2}-1) <\exp\left[\frac 1 n \ln(1 + d \eta(\alpha,\beta)^2)\right] -1.
\end{eqnarray*}
Since $0<r \leq M$, 
$$\varepsilon^2  (e^{r^2}-1) \leq C(M) (\varepsilon r)^2 = C(M) (\rho^\star)^2,$$ 
where the constant $C(M)$ satisfies $C(M)=(1+M^2 e^{M^2}/2)$ (see \eqref{ineqexp}).  On the other hand, 
$$\exp\left[\frac 1 n \ln(1 + d \eta(\alpha,\beta)^2)\right] -1 > \frac 1 n \ln(1 + d \eta(\alpha,\beta)^2).$$
Thus, the condition 
$\E_0[L_{\pi}^2(\underline X)] <1 + \eta(\alpha,\beta)^2$ is fulfilled as soon as
$$
C(M)\rho^2 < \frac 1 n \ln(1 + d \eta(\alpha,\beta)^2) $$
which is equivalent to
$$ \rho < \sqrt{ \frac{1}{C(M)}  \frac 1 n \ln(1 + d \eta(\alpha,\beta)^2)}.
$$
This concludes the proof of Theorem~\ref{thm:lb2}. 
\begin{flushright}
$\Box$
\end{flushright}

\section{Proof of Theorems~\ref{thm:ub1}, \ref{thm:ub2} and \ref{thm:ub3}}
\label{s:pub}
\subsection{Proof of Theorem~\ref{thm:ub1}}
First, remark that 
$$ \| \sqrt{n} \bar X_n \|^2 = \sum_{j=1}^d \left( \frac{1}{\sqrt{n}} \sum_{i=1}^n X_{ij} \right)^2.$$ 
Under $H_0$, $X_{ij}$ are i.i.d. standard Gaussian random variables. Hence $\|\sqrt{n} \bar X_n\|^2$ is a chi-squared random variable with $d$ degrees of freedom   and  
$$\P_0(\Psi_{1,\alpha} =1) = \P_0 (\|\sqrt{n} \bar X_n\|^2 > \upsilon_\alpha)=\alpha,$$
according to the definition of the quantile $\upsilon_\alpha$.  The test $\Psi_{1,\alpha}$ is hence of level $\alpha$. \\

Now, we want to control the second kind error.  Under $H_1$, each variable $X_i$ can be written as 
$$X_i = V_i \mu + \eta_i,$$ 
where $V_i$ is a Bernoulli variable with parameter $\varepsilon$, $\eta_i\sim \N_d(0_d, I_d)$ and $V_i$ and $\eta_i$ are independent. Then
$$ \sqrt{n} \bar X_n = \frac{S}{\sqrt{n}} \mu + B$$
where $S=\underset{i=1}{\stackrel{n}{\sum}} V_i \sim \mathcal{B}(n,\varepsilon)$ is a binomial random variable with parameters $(n,\varepsilon)$, $B=\underset{i=1}{\stackrel{n}{\sum}} \eta_i  /{\sqrt{n}} \sim \N_d (0_d,I_d)$ and $S,B$ are independent. In particular, conditionally to $S$, the variable
$\|\sqrt{n} \bar X_n\|^2 = \left\| \frac{S}{\sqrt{n}} \mu + B\right\|^2$ has a non-central chi-squared distribution with $d$ 
degrees of freedom and noncentrality parameter $\lambda_S= \| \frac{S}{\sqrt{n}} \mu \|^2$. Introduce 
$$h_S = d + \lambda_S - 2 \sqrt{[d + 2 \lambda_S]\ln(2 /\beta)}.$$ 
According to Lemma~\ref{LemmaLaurent} in Appendix A \citep[see also][]{Laurent2012}
$$
  \P\left( \left\| \frac{S}{\sqrt{n}} \mu + B\right\|^2 \leq h_S \ \Big | \ S \right) \leq \frac{\beta}{2}.
$$
Hence, for each $f\in \F_2[M]$, 
\begin{eqnarray*}
\P_{f}\left(\Psi_{1,\alpha}=0\right) 
&=& \P_{f}\left(\|\sqrt{n} \bar X_n \|^2 \leq \upsilon_\alpha \right),\\
&=& \P\left(\left\|\frac{S}{\sqrt{n}} \mu + B\right\|^2 \leq \upsilon_\alpha \right),\\
&=& \P\left(\left\lbrace \left\|\frac{S}{\sqrt{n}} \mu + B\right\|^2 \leq \upsilon_\alpha \right\rbrace \cap \left\lbrace h_S\leq \upsilon_\alpha\right\rbrace \right) \\
& & \hspace{1cm} +\P\left(\left\lbrace \left\|\frac{S}{\sqrt{n}} \mu + B\right\|^2 \leq \upsilon_\alpha \right\rbrace \cap \left\lbrace h_S> \upsilon_\alpha \right\rbrace \right), \\
&\leq & \P\left(h_S \leq \upsilon_\alpha \right) + \P\left(\left\|\frac{S}{\sqrt{n}} \mu + B\right\|^2 \leq h_S \right),\\
&\leq & \P\left(h_S\leq \upsilon_\alpha\right) + \frac \beta 2.
\end{eqnarray*}
According to Lemma \ref{LemmaLaurentMassart} in Appendix A, 
$$
	\upsilon_\alpha \leq d+ b(\alpha,d)\quad  \textrm{ where }  \quad b(\alpha,d) = 2 \ln(1 / \alpha) + 2 \sqrt{d \ln(1 / \alpha)}.
$$
Hence
\begin{eqnarray*}
\P(h_S\leq \upsilon_\alpha) 
&\leq & \P(h_S \leq d + b(d,\alpha)),\\
&\leq & \P(\lambda_S - 2 \sqrt{[d + 2 \lambda_S]\ln(2 /\beta)} \leq b(d,\alpha)),\\
 &\leq & \P(\lambda_S - 2 \sqrt{2 \ln(2 /\beta)} \sqrt{\lambda_S} - [2 \sqrt{d \ln(2/ \beta)} +  b(d,\alpha)] \leq 0),\\
&\leq & \P(\sqrt{\lambda_S} \leq R(\alpha,\beta,d)),
\end{eqnarray*}
with 
$$R(\alpha,\beta,d) = \sqrt{2 \ln(2/\beta)} + \sqrt{2 \ln(2/\beta) + 2 \sqrt{d \ln(2/\beta)} + b(\alpha,d)} .$$ 
We notice that   $ R(\alpha,\beta,d) \leq C(\alpha,\beta) d^{1/4}$ where $C(\alpha,\beta)$ is a constant depending only on $\alpha$ and $\beta$. Assuming that $\sqrt{n} \varepsilon \|\mu\| > C(\alpha,\beta) d^{1/4}$ and  using a Tchebychev's inequality leads to 
\begin{eqnarray*}
\P(h_S\leq \upsilon_\alpha) 
&\leq & \P\left(S\leq \frac{\sqrt{n}}{\|\mu\|} C(\alpha,\beta) d^{1/4} \right),\\
&\leq & \P\left(|S - n\varepsilon| > n\varepsilon -\frac{\sqrt{n}}{\|\mu\|} C(\alpha,\beta) d^{1/4} \right),\\
&\leq & \frac{n \varepsilon \|\mu\|^2 }{[n\varepsilon \|\mu\| - \sqrt{n} C(\alpha,\beta) d^{1/4}]^2},\\
&\leq&  \frac{n \varepsilon \|\mu\| {M} }{[n\varepsilon \|\mu\| - \sqrt{n} C(\alpha,\beta) d^{1/4}]^2}.
\end{eqnarray*}
Then, $\P(h_S\leq \upsilon_\alpha)\leq \beta/2$ is satisfied if
$$ \frac{n \varepsilon \|\mu\|  }{[n\varepsilon \|\mu\| - \sqrt{n} C(\alpha,\beta) d^{1/4}]^2} \leq \frac{\beta}{2M}.$$
The last inequality is fulfilled if
$$
\varepsilon \|\mu\| \geq \frac{\sqrt{n} C(\alpha,\beta) d^{1/4}}{n} + \frac{M}{\beta n} + \frac{\sqrt{1 + 2 \sqrt{n} C(\alpha,\beta) d^{1/4} \beta/M}}{\beta n /M}
$$
noticing that  $\frac{nx}{(nx-c)^2}\leq B$ is fulfilled if and only if $x\notin\left[\frac c n + \frac{1}{2 B n} \pm \frac{\sqrt{1 + 4 c B}}{2 B n}\right]$.\\
Thus $\P(h_S\leq \upsilon_\alpha)\leq \beta/2$ if 
$$
\varepsilon \|\mu\| \geq \frac{\tilde C(\alpha,\beta,M) d^{1/4}}{\sqrt{n}} ,
$$
where $\tilde C(\alpha,\beta,M)$ is a positive constant depending on $\alpha, \beta $ and $M$.

\begin{flushright}
$\Box$
\end{flushright}

\subsection{Proof of Theorem~\ref{thm:ub2}}
Following the definition of $\alpha_n$, $\Psi_{2,\alpha}$ is ensured to be a level-$\alpha$ test. Then, under $H_1$, each variable $Y_i$ can be written as 
$$ Y_i = V_i \mu + \eta_i, \quad i=1\dots n,$$
where $V_i \sim \mathcal{B}(\varepsilon)$ denotes a random Bernoulli variable  and $\eta_i \sim \mathcal{N}_d(0_d,I_d)$, $V_i$ and $\eta_i$ being independent. Hence, for all $i\in \lbrace 1,\dots, n \rbrace$, we get
$$ Z_i^{(v_n)} = \langle Y_i , v_n \rangle = V_i \langle \mu, v_n \rangle + \langle \eta_i , v_n \rangle$$ 
which satisfies 
$$Z_i^{(v_n)} | v_n \sim (1-\varepsilon) \mathcal{N}(0,1) + \varepsilon \mathcal{N}(\langle \mu, v_n \rangle, 1). $$ 
Let $f\in\F_2[M]$ and $\Omega_{v_n}$ be the event defined as 
$$ \Omega_{v_n} = \left\lbrace \varepsilon \langle \mu, v_n \rangle > C\left(\frac{\beta}{2}, M\right)\frac{\kappa_n}{\sqrt{n}} \sqrt{\ln (1/\alpha) }\right\rbrace,$$
where $\kappa_n= \sqrt{\ln \ln (n) }$ and $ C(\beta, M)$ denotes the constant appearing in (\ref{Conddense}) (see Appendix B). In the following, the constant $C(\frac{\beta}{2}, M) \sqrt{\ln (1/\alpha) }$ is denoted $C_{\alpha,\beta, M}$.

Then, using Lemma \ref{lem:testunid} in Appendix B,  
we get 
\begin{eqnarray}
\P_f (\Psi_{2,\alpha} = 0 ) & \leq & \E_{v_n}\cro{  \E_f\cro{\mathds{1}_{\Psi_{2,\alpha}=0}  \mathds{1}_{\Omega_{v_n}}   | v_n }}  + \P_f( \Omega_{v_n}^c)\nonumber\\
&\leq& \E_{v_n}\cro{ \mathds{1}_{\Omega_{v_n}}  \E_f\cro{\mathds{1}_{\Psi_{2,\alpha}=0}    | v_n}}  + \P_f( \Omega_{v_n}^c)\nonumber\\
&\leq& \frac \beta 2 + \P_f( \Omega_{v_n}^c).
\label{eq:contr1}
\end{eqnarray}
Now, we turn our attention to the control of $\P_f( \Omega_{v_n}^c)$. Using the definition of $v_n$ we obtain
\begin{eqnarray*}
\P_f( \Omega_{v_n}^c)
& = & \P_f \left( \varepsilon \langle \mu,v_n \rangle \leq \frac{C_{\alpha,\beta,M}}{\sqrt{n}} \kappa_n \right), \\
& = & \P_f \left( \varepsilon \left\langle \mu,\frac{\bar A_n}{\| \bar A_n\|} \right\rangle \leq \frac{C_{\alpha,\beta,M}}{\sqrt{n}}\kappa_n \right), \\
& = & \P_f \left( \varepsilon \left\langle \mu,\bar A_n \right\rangle \leq \frac{C_{\alpha,\beta,M}}{n}\| \sqrt{n} \bar A_n\| \kappa_n \right).
\end{eqnarray*}
At this step, remark that the variable $\bar A_n$ can be written as
\begin{equation} 
\bar A_n = \frac{S}{n} \mu +  \frac{U}{\sqrt{n}},
\label{eq:An}
\end{equation}
where $S \sim \mathcal{B}(n,\varepsilon)$, $U\sim \mathcal{N}_d(0_d,I_d)$, and $S$ and $U$ are independent. Moreover, conditionally to $S$, $\| \sqrt{n} \bar A_n\|^2$ has a noncentral chi-squared distribution with $d$ degrees of freedom and noncentrality parameter $\lambda_S= \| \frac{S}{\sqrt{n}} \mu \|^2$. Introduce
$$
h(S) = d + \lambda_S+ 2 \sqrt{ (d+ 2 \lambda_S ) x_\beta } + 2 x_\beta
$$
with $x_\beta=\ln(4/\beta)$. According to Lemma \ref{LemmaLaurent}, 
$$
\P_f \left( \| \sqrt{n} \bar A_n\|^2 > h(S) \ | \ S \right) \leq \frac{\beta}{4}.
$$

Then,
\begin{eqnarray}
\P_f( \Omega_{v_n}^c) 
& = &  \P_f\left( \varepsilon \left\langle \mu,\bar A_n \right\rangle \leq \frac{C_{\alpha,\beta,M}}{n}\| \sqrt{n} \bar A_n\| \kappa_n \right),\nonumber \\
& \leq & \P_f \left( \varepsilon \left\langle \mu,\bar A_n \right\rangle \leq \frac{C_{\alpha,\beta,M}}{n} \sqrt{h(S)} \kappa_n \right) +  \P_f \left( \| \sqrt{n} \bar A_n\|^2 > h(S) \right), \nonumber\\
&\leq&  \P_f \left( \varepsilon \left\langle \mu,\bar A_n \right\rangle \leq \frac{C_{\alpha,\beta,M}}{n} \sqrt{h(S)} \kappa_n \right)  +  \frac{\beta}{4}\nonumber\\
&\leq&  \P_f \left( \frac{\varepsilon \|\mu\|^2}{n} S + \frac{\varepsilon \|\mu\|}{\sqrt{n}} Z \leq \frac{C_{\alpha,\beta,M}}{n} \sqrt{h(S)}\kappa_n \right)  +\frac{\beta}{4}\nonumber
\end{eqnarray}
since
\begin{equation} 
\varepsilon \langle \mu,\bar A_n \rangle = \frac{\varepsilon \|\mu\|^2}{n} S + \frac{\varepsilon \|\mu\|}{\sqrt{n}} Z,
\label{eq:An2}
\end{equation}
where $Z \sim  \mathcal{N}(0,1)$ and is independent of $S$. 

Let $C_{1,\beta} = \sqrt{{8}/{\beta}}$. According to the Tchebychev's inequality,
$$
\P\pa{|S - n\varepsilon | > C_{1,\beta} \sqrt{n\varepsilon} } \leq \frac{\beta}{8}.
$$
Thus, 
\begin{eqnarray*}
\P_f( \Omega_{v_n}^c) 
&\leq&  \P_f \left( \left\lbrace \frac{\varepsilon \|\mu\|^2}{n} S + \frac{\varepsilon \|\mu\|}{\sqrt{n}} Z \leq \frac{C_{\alpha,\beta,M}}{n} \sqrt{h(S)} \kappa_n \right\rbrace \cap \left\lbrace |S - n\varepsilon | \leq C_{1,\beta} \sqrt{n\varepsilon} \right\rbrace \right) \\
& & \hspace{9cm} +  \frac{\beta}{8} +\frac{\beta}{4}.\\
\end{eqnarray*}
Note that, if $|S - n\varepsilon | \leq C_{1,\beta} \sqrt{n\varepsilon}$ and $\sqrt{n\varepsilon} > 2C_{1,\beta}$ then $S\in\cro{\frac 1 2 n \varepsilon, \frac 3 2 n \varepsilon}$.  Hence, on the event $|S - n\varepsilon | \leq C_{1,\beta} \sqrt{n\varepsilon}$
\begin{eqnarray*}
\sqrt{h(S)}  
&=& \pa{d + \lambda_S+ 2 \sqrt{ (d+ 2 \lambda_S ) x_\beta } + 2 x_\beta}^{\frac 1 2}\\
&\leq& \sqrt{d + 2 x_\beta + 2 \sqrt{d x_\beta}}  + \frac{S \|\mu\|}{\sqrt{n}}  + \sqrt{\frac{2\sqrt{2 x_{\beta}} S \|\mu\|}{\sqrt{n}} }\\
&\leq& a_{1,\beta} \sqrt{d} + \frac 3 2 \sqrt{n} \varepsilon \|\mu\|  +  \sqrt{3 \sqrt{2 n x_{\beta}} \varepsilon \|\mu\|  } \\
&\leq& a_{1,\beta} \sqrt{d} + a_{2,\beta} \sqrt{n} \varepsilon \|\mu\| \\
\end{eqnarray*}
since $S\leq \frac 3 2 n \varepsilon$ and $\sqrt{n}\varepsilon \|\mu\| \geq 1$, where $ a_{1,\beta}$ and $a_{2,\beta}$ are two positive constants. Then, as soon as $\sqrt{n \varepsilon} > 2 C_{1,\beta}$,
\begin{eqnarray*}
\lefteqn{\P_f( \Omega_{v_n}^c) }\\
&\leq&  \P_f \left( \left\lbrace \frac{\varepsilon \|\mu\|^2}{n} S + \frac{\varepsilon \|\mu\|}{\sqrt{n}} Z \leq \frac{C_{\alpha,\beta,M}}{n}\kappa_n \sqrt{h(S)} \right\rbrace \cap  \left\lbrace |S - n\varepsilon | \leq C_{1,\beta} \sqrt{n\varepsilon} \right\rbrace \right)  + \frac{3\beta}{8} ,\\
&\leq& \P_f\pa{ 
\frac 1 2 \varepsilon^2 \|\mu\|^2  + \frac{\varepsilon \|\mu\|}{\sqrt{n}} Z 
\leq \frac{C_{\alpha,\beta,M}}{n} \kappa_n \cro{a_{1,\beta} \sqrt{d} + a_{2,\beta} \sqrt{n} \varepsilon \|\mu\|  }      }   + \frac{3\beta}{8}, \\
&\leq& \P_f\pa{ 
 \frac{\varepsilon \|\mu\|}{\sqrt{n}} Z 
\leq \frac{C_{\alpha,\beta,M}}{n} \kappa_n \cro{a_{1,\beta} \sqrt{d} + a_{2,\beta} \sqrt{n} \varepsilon \|\mu\|  } - \frac 1 2 \varepsilon^2 \|\mu\|^2   }   + \frac{3\beta}{8} .\\
\end{eqnarray*}
Let $z_{\beta/8}$ be the $\beta/8$ quantile of the standard Gaussian distribution. Then,
$$
\P\pa{\frac{\varepsilon \|\mu\|}{\sqrt{n}} Z \leq \frac{\varepsilon \|\mu\|}{\sqrt{n}} z_{\beta/8}} = \frac{\beta}{8}.
$$
Then, $\P_f( \Omega_{v_n}^c) \leq  \frac{\beta}{8} + \frac{3\beta}{8} = \frac \beta 2 $ if 
\begin{eqnarray}
&   &\frac{C_{\alpha,\beta,M}}{n} \kappa_n \cro{a_{1,\beta} \sqrt{d} + a_{2,\beta} \sqrt{n} \varepsilon \|\mu\|  } - \frac 1 2 \varepsilon^2 \|\mu\|^2  \leq \frac{\varepsilon \|\mu\|}{\sqrt{n}} z_{\beta/8} \nonumber \\
&\Leftrightarrow & 
n \varepsilon^2 \|\mu\|^2 -  2 C_{\alpha,\beta,M}\kappa_n \cro{a_{1,\beta} \sqrt{d} + a_{2,\beta} \sqrt{n} \varepsilon \|\mu\|  } + 2 z_{\beta/8} \sqrt{n}\varepsilon \|\mu\| \geq 0 \nonumber \\
&\Leftrightarrow & 
n \varepsilon^2 \|\mu\|^2 -  2 v_{1,n}  \sqrt{n} \varepsilon \|\mu\|  -  v_{2,n} \sqrt{d}   \geq 0,
\label{eq:ineq2}
\end{eqnarray}
with $v_{1,n} = C_{\alpha,\beta,M} \kappa_n a_{2,\beta} -z_{\beta/8} >0$ and $v_{2,n}=2 C_{\alpha,\beta,M} \kappa_n a_{1,\beta}>0$. 
Inequality \eqref{eq:ineq2} is fulfilled if 
$$
\sqrt{n} \varepsilon \|\mu\|  \geq v_{1,n} + \sqrt{v_{1,n}^2 + v_{2,n} \sqrt{d}}. 
$$

Finally, $\P_f\pa{\Psi_{2,\alpha} = 0}\leq \beta$ occurs as soon as 
\begin{equation}
\sqrt{n}\varepsilon \|\mu \| > C^* d^{1/4} \kappa_n \geq 1 \textrm{  and  } n \varepsilon \geq \tilde C
\label{eq:condDense}
\end{equation}
for some constants $C^*$ and $\tilde C$ which only depend on $\alpha, \beta$ and $M$. 

\begin{flushright}
$\Box$
\end{flushright}

\subsection{Proof of Theorem~\ref{thm:ub3}}
The test $\Psi_{3,\alpha}$ is a level-$\alpha$ test since
\begin{eqnarray*}
\P_0(\Psi_{3,\alpha}=1) &=&\P_0\left(\exists j\in\{1,\ldots,d\}; T_{\frac{\alpha}{2d},j}^+ = 1 \cup T_{\frac{\alpha}{2d},j}^- =1\right),\\
&\leq& \underset{j=1}{\stackrel{d}{\sum}} \left[  \P_0\left(T_{\frac{\alpha}{2d},j}^+ = 1\right) +  \P_0\left(T_{\frac{\alpha}{2d},j}^- =1\right)\right],\\
&\leq& d \times 2 \times \frac{\alpha}{2d} = \alpha.
\end{eqnarray*} 
We now consider the control on the second kind error of this test $\Psi_{3,\alpha}$ when the alternative is measured via the infinite norm. 
If 
$$
	\varepsilon \|\mu\|_\infty \geq C(\beta,M)  \times \sqrt{\frac{\ln\ln(n) \ln(2 d / \alpha)}{n}},$$ 
then there exists $j_0\in\{1,\ldots,d\}$ such that 
$$
\P_f\left( T_{\frac{\alpha}{2d},j_0}^+ = 0\right) \leq \beta \textrm{ or }   \P_f\left( T_{\frac{\alpha}{2d},j_0}^- = 0\right) \leq \beta
$$
according to Lemma \ref{lem:testunid} in Appendix B. Then,
\begin{eqnarray*}
\P_f(\Psi_{3,\alpha}=0) &=& \P_f \left( \underset{1\leq j \leq d}{\sup}\ \max\left(T_{\frac{\alpha}{2d},j}^+,T_{\frac{\alpha}{2d},j}^-\right) =0 \right),\\
& = & \P_f\left( \bigcap_{j=1..d} \lbrace  T_{\frac{\alpha}{2d},j}^+ = 0 \rbrace \cap \lbrace T_{\frac{\alpha}{2d},j}^- = 0\rbrace \right),\\
&\leq& \inf_{j=1\dots d} \left[\P_f(T_{\frac{\alpha}{2d},j}^+ = 0 )\wedge \P_f(T_{\frac{\alpha}{2d},j}^- = 0)\right
], \\
&\leq& \P_f(T_{\frac{\alpha}{2d},j_0}^+ = 0 )\wedge \P_f(T_{\frac{\alpha}{2d},j_0}^- = 0)\leq \beta.
\end{eqnarray*}

\begin{flushright}
$\Box$
\end{flushright}

\newpage
\appendix
\section{Properties for chi-squared distribution and noncentral chi-squared distribution.}
\label{s:append:1}

In this section, we present some well-known results there are useful throughout the proofs. The first lemma is concerned with deviation of a chi-squared random variable, proposed in \cite{LaurentMassart2000}.

\begin{lem}\label{LemmaLaurentMassart} 
Let $U$ be a chi-squared random variable with $d$ degrees of freedom. Then, 
\begin{itemize}
\item for any positive $x$,
$$
\left\{\begin{array}{l}
\P(U\geq d + 2 \sqrt{dx} + 2x)\leq e^{-x},\\
\P(U\leq d - 2 \sqrt{dx} )\leq e^{-x}.
\end{array}\right.
$$
\item For any given $\alpha\in]0,1[$, let $u(d,\alpha)$ be the $(1-\alpha)$-quantile of $\chi^2(d)$. Then
$$u(d,\alpha) \leq d + 2 \ln( 1 /\alpha) + 2 \sqrt{d \ln(1/ \alpha)} = d + b(\alpha,d).$$
\end{itemize}
\end{lem}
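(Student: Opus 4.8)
The plan is to use the Cramér--Chernoff method, exploiting the explicit Laplace transform of the chi-squared distribution. Writing $U=\sum_{i=1}^d Z_i^2$ with the $Z_i$ i.i.d.\ standard Gaussian, one has $\E[e^{\lambda U}]=(1-2\lambda)^{-d/2}$ for $\lambda<1/2$ and $\E[e^{-\lambda U}]=(1+2\lambda)^{-d/2}$ for $\lambda>0$. For the upper tail I would apply Markov's inequality to $e^{\lambda U}$, so that for every $\lambda\in(0,1/2)$,
$$\P(U\geq d+t)\leq \exp\left(-\lambda(d+t)-\tfrac{d}{2}\ln(1-2\lambda)\right),$$
and then optimize the exponent over $\lambda$. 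The maximizer is $\lambda^\star=t/[2(d+t)]$, which produces the rate function $I(t)=\tfrac12 t-\tfrac{d}{2}\ln(1+t/d)$.

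It then remains to verify that $I(t)\geq x$ at $t=2\sqrt{dx}+2x$. Setting $y=\sqrt{x/d}$, so that $t/d=2y+2y^2$, this inequality reduces after simplification to $1+2y+2y^2\leq e^{2y}$, which holds termwise from the power series of the exponential. This yields the first bound. For the lower tail I would symmetrically apply Markov's inequality to $e^{-\lambda U}$ for $\lambda>0$, optimize to obtain $\P(U\leq d-s)\leq \exp\left(\tfrac{s}{2}+\tfrac{d}{2}\ln(1-s/d)\right)$, and check that with $s=2\sqrt{dx}$ the exponent is at most $-x$; the key elementary inequality here is $\ln(1-w)\leq -w-w^2/2$ on $[0,1)$, applied with $w=2\sqrt{x/d}$.

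Finally, the quantile bound is an immediate consequence of the upper tail estimate: taking $x=\ln(1/\alpha)$ gives $\P(U\geq d+2\ln(1/\alpha)+2\sqrt{d\ln(1/\alpha)})\leq\alpha$, so the deterministic threshold $d+b(\alpha,d)$ dominates the $(1-\alpha)$-quantile $u(d,\alpha)$, as claimed. I expect the only delicate point to be the two scalar inequalities that force the rate function to exceed $x$ at the prescribed deviations; everything else is the routine Chernoff optimization. Since this is precisely Lemma~1 of \cite{LaurentMassart2000}, one may alternatively simply invoke that reference.
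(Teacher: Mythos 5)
Your proposal is correct: the Chernoff optimization, the maximizer $\lambda^\star = t/[2(d+t)]$, and the two scalar inequalities $e^{2y}\ge 1+2y+2y^2$ and $\ln(1-w)\le -w-w^2/2$ all check out (and for the lower tail, when $2\sqrt{x/d}\ge 1$ the event $\{U\le d-2\sqrt{dx}\}$ is empty since $U>0$ a.s., so restricting the log inequality to $w\in[0,1)$ costs nothing). This matches the paper's treatment, which gives no proof and simply invokes \cite{LaurentMassart2000}; your Cram\'er--Chernoff computation is precisely the argument behind that reference's result, so the two approaches coincide.
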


This second lemma provides the control of deviations of a noncentral chi-squared random variable, available in \cite{Birge2001}.

\begin{lem}\label{LemmaLaurent} 
Let $T$ be a noncentral chi-squared random variable with $d$ degrees of freedom and a noncentrality parameter $\lambda$. Then, for any positive $x$,
$$
\left\{\begin{array}{l}
\P\left(T \geq d + \lambda + 2 \sqrt{(d+2\lambda) x} + 2  x \right) \leq e^{-x},\\
\P\left(T \leq d + \lambda -2 \sqrt{(d+2\lambda) x}\right) \leq e^{-x}.
\end{array}\right.
$$
\end{lem}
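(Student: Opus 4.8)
The plan is to establish both tail bounds by the Cram\'er--Chernoff method, exploiting the explicit Laplace transform of a noncentral chi-squared law. First I would use the representation $T = \sum_{i=1}^d Y_i^2$, where the $Y_i$ are independent with $Y_i \sim \N(a_i,1)$ and $\sum_{i=1}^d a_i^2 = \lambda$. A direct Gaussian moment computation then gives, for every $s<1/2$,
$$
\E\cro{e^{sT}} = (1-2s)^{-d/2}\exp\pa{\frac{s\lambda}{1-2s}},
$$
so that the centered log-Laplace transform $L(s) = \ln \E[e^{s(T-(d+\lambda))}]$ is available in closed form, split naturally into a ``degrees of freedom'' part coming from $-\tfrac{d}{2}\ln(1-2s)-sd$ and a ``noncentrality'' part coming from $s\lambda/(1-2s)-s\lambda$.

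For the upper tail I would fix $s\in(0,1/2)$ and apply Markov's inequality to $e^{sT}$, reducing the problem to minimizing $L(s)-su$ over $s$ for the relevant deviation $u$. The key step is the elementary bound
$$
L(s) \le \frac{(d+2\lambda)\,s^2}{1-2s}, \qquad 0 \le s < \tfrac12,
$$
which I would obtain from the inequality $-\tfrac12\ln(1-2s)-s \le s^2/(1-2s)$ applied to the $d$ degrees of freedom, together with $s\lambda/(1-2s)-s\lambda = 2s^2\lambda/(1-2s)$ for the noncentrality contribution. This exhibits $T$ as a sub-gamma variable with variance factor $2(d+2\lambda)$ and scale parameter $2$, and the standard sub-gamma optimization (choosing $s$ so that the quadratic and linear terms balance) yields exactly $\P(T \ge d+\lambda+2\sqrt{(d+2\lambda)x}+2x) \le e^{-x}$.

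For the lower tail I would run the same argument with $-T$, using $\E[e^{-sT}] = (1+2s)^{-d/2}\exp\pa{-s\lambda/(1+2s)}$ for $s>0$. Here the convexity estimate $\ln(1+u)\ge u-u^2/2$ furnishes the sharper \emph{sub-Gaussian} control
$$
\ln \E\cro{e^{-s(T-(d+\lambda))}} \le (d+2\lambda)\,s^2,
$$
with no scale term, and minimizing over $s>0$ produces the purely Gaussian deviation $\P(T \le d+\lambda-2\sqrt{(d+2\lambda)x}) \le e^{-x}$. This structural asymmetry is precisely what explains why the lower bound carries no linear-in-$x$ correction while the upper one does.

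I expect the only genuinely delicate point to be verifying the two elementary log-Laplace inequalities and carrying out the optimization so that the resulting constants match the stated form $2\sqrt{(d+2\lambda)x}+2x$ \emph{exactly}, rather than merely up to a multiplicative factor; every remaining ingredient is a routine Gaussian computation. This result is the noncentral analogue of the Laurent--Massart deviation bound of Lemma~\ref{LemmaLaurentMassart} and is due to \cite{Birge2001}, so in the write-up one could alternatively simply invoke that reference.
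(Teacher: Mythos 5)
Your proposal is correct, but a step-by-step comparison with the paper is not possible for a simple reason: the paper does not prove this lemma at all. It is stated as a known result, with the deviation bounds attributed to \cite{Birge2001}, exactly as you anticipate in your closing remark. Your self-contained Cram\'er--Chernoff derivation is in substance the argument behind that citation (and behind the central case, Lemma~\ref{LemmaLaurentMassart}, due to \cite{LaurentMassart2000}), and every step of it checks out. The Laplace transform $\E\cro{e^{sT}}=(1-2s)^{-d/2}\exp\pa{s\lambda/(1-2s)}$ is right; the noncentrality part of the centered log-Laplace transform equals $2s^2\lambda/(1-2s)$ \emph{exactly}, while the degrees-of-freedom part is bounded by $ds^2/(1-2s)$ via $-\ln(1-u)-u\le \frac{u^2}{2(1-u)}$ (compare the power series $\sum_{k\geq 2}u^k/k \leq \frac{1}{2}\sum_{k\geq 2}u^k$), so $T$ is indeed sub-gamma with variance factor $2(d+2\lambda)$ and scale $2$. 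Your worry about the constants matching exactly is unfounded: taking $s=\sqrt{x}/(\sqrt{d+2\lambda}+2\sqrt{x})$ in the upper-tail Chernoff bound makes the exponent equal to $-x$ exactly, and taking $s=\sqrt{x/(d+2\lambda)}$ does the same for the lower tail, yielding precisely $\P(T\geq d+\lambda+2\sqrt{(d+2\lambda)x}+2x)\leq e^{-x}$ and $\P(T\leq d+\lambda-2\sqrt{(d+2\lambda)x})\leq e^{-x}$. Your structural observation explaining the asymmetry is also accurate: on the lower tail, $\ln(1+u)\geq u-u^2/2$ for $u\geq 0$ gives a purely sub-Gaussian bound with no scale correction, which is why no linear-in-$x$ term appears there. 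What your route buys over the paper's is self-containedness and this explanation of where each term in the bound comes from; what the paper's route buys is brevity, since the result is standard.
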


\section{Unidimensional test}
\label{s:append:2}

We have previously proposed some testing procedures that uses results proposed in \cite{LLM2014} in a unidimensional context. For the sake of convenience, we reproduce a slightly different version of theses contributions in order to facilitate the understanding of the proofs. \\

Let $(Z_1,\ldots,Z_n)$ be i.i.d. random variables from an unknown density $g$ w.r.t. the Lebesgue measure
on $\mathbb{R}$. The order statistics are denoted by $Z_{(1)}\leq Z_{(2)}\leq \ldots\leq Z_{(n)}$. 
We want to test 
$$H_0:\  g = \phi(.) \quad \textrm{ versus } \quad H_1:\  g\in \mathcal G_1[M],$$ 
where $\phi(.)=\phi_1(.)$ is the unidimensional Gaussian density and 
$$
\mathcal G_1[M]=\{z\in\R \mapsto (1-\varepsilon) \phi(z) + \varepsilon \phi(z-\tau); 0 < \tau \leq M\}.
$$

Let $\alpha\in]0,1[$. 
Let $T_\alpha$ be the test statistics defined as
\begin{equation} 
\label{def:testStatOrdunid}
T_{\alpha} := \sup_{k \in \K_n} \left\{  \mathds{1}_{Z_{(n-k+1)} > z_{\alpha_n,k}}  \right\},
\end{equation}
where, for all $u \in ]0,1[$, $z_{u,k}$ is the $(1-u)$-quantile of $Z_{(n-k+1)} $ under the null hypothesis and
$$\alpha_n=\sup \left\{u \in ]0,1[, \P_{H_0}\left(\exists k \in \K_n, Z_{(n-k+1)} > z_{u,k}\right) \leq \alpha\right\}.$$
The following lemma establishes sufficient conditions that allow to control the second kind error of $T_\alpha$.

\begin{lem}
\label{lem:testunid}
Let $\beta\in]0,1-\alpha[$. Assume that $n \geq 3$ and
$$8.25\times \frac{\ln(2\log_2(n/2)/\alpha)}{n} \leq \int_M^{+\infty} \phi(x) dx.$$
Then, there exists a positive constant $ C(\beta,M) $ depending only on $\beta$ and M, such that if
\begin{equation}\label{Conddense}
\rho \geq C(\beta,M) \sqrt{\frac{\ln\ln(n)  \ln(1/\alpha)}{n}},
\end{equation}
then,
$$ \sup_{\underset{\varepsilon \tau \geq \rho}{g\in\mathcal G_{1}[M]}}  \P_g(T_{\alpha}=0)\leq \beta. $$
\end{lem}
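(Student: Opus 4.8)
The plan is to bound the second kind error $\P_g(T_\alpha = 0)$ by exhibiting a single favourable order in the dyadic grid $\K_n$. Since $\{T_\alpha = 0\} = \bigcap_{k \in \K_n}\{Z_{(n-k+1)} \le z_{\alpha_n, k}\}$, for any fixed $k^\star \in \K_n$ one has $\P_g(T_\alpha = 0) \le \P_g(Z_{(n-k^\star+1)} \le z_{\alpha_n, k^\star})$, so it suffices to select $k^\star$ (allowed to depend on the true $(\varepsilon,\tau)$) for which this single probability is at most $\beta$. The natural reformulation is through exceedance counts: writing $\bar\Phi(t) = \int_t^{+\infty}\phi(x)\,dx$ and $N(t) = \#\{i : Z_i > t\}$, the event $\{Z_{(n-k+1)} > t\}$ coincides with $\{N(t) \ge k\}$, so that $N(t) \sim \mathrm{Bin}(n, \bar\Phi(t))$ under $H_0$ and $N(t) \sim \mathrm{Bin}(n, \bar G(t))$ under $g \in \mathcal G_1[M]$, where $\bar G(t) = (1-\varepsilon)\bar\Phi(t) + \varepsilon\bar\Phi(t-\tau)$. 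Thus $\P_g(Z_{(n-k^\star+1)} \le z_{\alpha_n, k^\star}) = \P(\mathrm{Bin}(n, \bar G(z_{\alpha_n, k^\star})) \le k^\star - 1)$, and everything reduces to two opposing estimates on the threshold $z_{\alpha_n, k^\star}$.

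First I would control the calibrated quantiles under the null. The Bonferroni argument gives $\alpha_n \ge \alpha / |\K_n|$ with $|\K_n| = \lfloor \log_2(n/2)\rfloor + 1$, hence $\ln(1/\alpha_n) \le \ln(|\K_n|/\alpha)$, a quantity of order $\ln\ln n + \ln(1/\alpha)$; this is where the $\ln\ln n$ contribution and the condition involving $\ln(2\log_2(n/2)/\alpha)$ enter. Since $z_{\alpha_n, k}$ is defined by $\P(\mathrm{Bin}(n, \bar\Phi(z_{\alpha_n, k})) \ge k) = \alpha_n$, a binomial upper-tail (Chernoff/Bernstein) bound yields an upper bound on $z_{\alpha_n, k}$, equivalently the lower bound $n\bar\Phi(z_{\alpha_n, k}) \ge k - c\sqrt{k\,\ln(1/\alpha_n)}$ for an explicit constant $c$. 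The hypothesis $8.25\,\ln(2\log_2(n/2)/\alpha)/n \le \bar\Phi(M)$ is precisely what guarantees that an admissible order $k^\star \in \K_n$ exists for which this bound is informative, the expected number of null exceedances at the relevant threshold then dominating $\ln(|\K_n|/\alpha)$.

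Next I would quantify the gain brought by the contamination. At the threshold $z = z_{\alpha_n, k^\star}$ one has $n\bar G(z) = n\bar\Phi(z) + n\varepsilon[\bar\Phi(z-\tau) - \bar\Phi(z)]$, and the excess mass satisfies $\bar\Phi(z-\tau) - \bar\Phi(z) \asymp \tau\,\phi(z)$ uniformly for $\tau \in (0, M]$, with implicit constants depending on $M$ (this dependence is what produces the factor $C(\beta, M)$). Choosing $k^\star$ as the power of two in $\K_n$ closest to a fixed fraction of $n$, so that $z$ stays bounded away from $0$ and $k^\star \asymp n$, the signal term is of order $\varepsilon\tau\, n$, to be compared with the null fluctuation of order $\sqrt{n(\ln\ln n + \ln(1/\alpha))}$. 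One then checks that $\varepsilon\tau \ge \rho \ge C(\beta, M)\sqrt{\ln\ln(n)\,\ln(1/\alpha)/n}$ forces $n\bar G(z) \ge k^\star + c'\sqrt{k^\star\,\ln(1/\beta)}$, i.e. the mean exceedance count under $g$ exceeds $k^\star$ by enough standard deviations. A binomial concentration inequality (Chebyshev, or Bernstein for sharper constants) then gives $\P(\mathrm{Bin}(n, \bar G(z)) \le k^\star - 1) \le \beta$, which closes the argument.

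The main obstacle is the simultaneous calibration in the last step: the order $k^\star$ must be chosen to balance the null quantile inflation (of order $\sqrt{k^\star\,\ln(1/\alpha_n)}$) against the signal (of order $\varepsilon\tau\,z\,k^\star$) while being constrained to the dyadic grid $\K_n$, and while the threshold $z$, the excess-mass constant, and the required margin all depend on the unknown $\tau \in (0, M]$. Extracting the explicit constants — in particular matching the numerical factor $8.25$ and producing the form $\sqrt{\ln\ln(n)\,\ln(1/\alpha)/n}$ rather than the weaker additive form $\sqrt{(\ln\ln n + \ln(1/\alpha))/n}$ — requires care in the non-asymptotic binomial tail estimates and in the $M$-dependent behaviour of the Gaussian tail, and this bookkeeping, rather than the overall strategy, is the delicate part of the proof.
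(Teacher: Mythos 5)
Your proposal follows essentially the same route as the paper's proof: reduce $\P_g(T_\alpha=0)$ to a single well-chosen dyadic order $k^\star\asymp n$ (tied to $\bar\Phi(M)$, which is where the hypothesis on $8.25\ln(2\log_2(n/2)/\alpha)/n$ enters), upper-bound the calibrated null quantile $z_{\alpha_n,k}$ via a binomial tail estimate (the paper cites Lemma A.1 of \cite{LLM2014} for exactly the bound you re-derive), lower-bound the excess exceedance mass by $\varepsilon\tau\phi(a)$ with $a$ confined to an $M$-dependent compact interval, and finish with binomial concentration (the paper uses Chebyshev where you also allow Bernstein). The argument is correct and the remaining work you flag is indeed only the constant bookkeeping the paper carries out.
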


\begin{proof}
By definition, the test statistics $T_{\alpha}$ introduced in \eqref{def:testStatOrdunid} is exactly of level $\alpha$, namely
$$ \P_{H_0}(T_\alpha = 1) = \P_{H_0}\pa{\exists k \in \K_n, Z_{(n-k+1)} > z_{\alpha_n,k}} \leq \alpha,$$
thanks to the definition of $\alpha_n$.

In order to  control the second kind error of the test $T_\alpha$,  we first give an upper bound for $z_{\alpha_n,k}$. 
Let $\bar\Phi(x) = 1 - \Phi(x)$, where $\Phi$ is the cumulative distribution function associated to the density function $\phi$. For all $\alpha \in ]0,1[$ and $k \in \{1,2, \ldots, n/2 \}$, let $t_{\alpha,k}$
be the positive real number defined as
\begin{equation}
\label{def:talphak}
\bar\Phi\left(t_{\alpha,k}\right) = \frac k n \left[1-\sqrt{\frac{2\ln(\frac 2 \alpha)}{k}}\right]
\end{equation}
if $k>2\ln(\frac 2 \alpha)$, and $t_{\alpha,k}=+\infty$ otherwise. According to Lemma A.1 in \citet{LLM2014}, $z_{\alpha_n,k} \leq t_{\alpha_n,k}$. Considering $g\in \mathcal G_1[M]$, we want to control the second kind error of the test:
\begin{eqnarray}
\P_g(T_\alpha =0) &=& \P_g\pa{\forall k \in \K_n,Z_{(n-k+1)}\leq z_{\alpha_n,k}}\nonumber \\
&\leq & \inf_{k \in \K_n}\P_g\pa{Z_{(n-k+1)} \leq z_{\alpha_n,k}}  
\label{adapt}.
\end{eqnarray}
Since $ z_{\alpha_n,k} \leq t_{\alpha_n,k} $, using Markov's inequality
\begin{eqnarray*}
\P_g\pa{Z_{(n-k+1)} \leq z_{\alpha_n,k}} 
&\leq& \P_g(Z_{(n-k+1)} \leq t_{\alpha_n,k})\\
&\leq&  \P_g\left(\sum_{i=1}^n \left\{\mathds{1}_{\{Z_i\leq t_{\alpha_n,k}\}} - q_1\right\}> n (1-q_1) -k\right) \\
&\leq& \frac{n(1-q_1)}{[n(1-q_1) -k]^2}
\end{eqnarray*}
if $n(1-q_1)-k>0$, where
$$
1- q_1 = \P_g\left(Z_1\geq t_{\alpha_n,k}\right)  
= (1-\varepsilon) \bar\Phi\left(t_{\alpha_n,k}\right) + \varepsilon \bar\Phi\left(t_{\alpha_n,k} - \tau\right).
$$
Note that the inequality $\frac{nx}{(nx-k)^2} \leq \beta$ is fulfilled if and only if $x\notin\left[\frac k n + \frac{2}{\beta n} \pm \frac{2\sqrt{1 + 4k\beta}}{\beta n}\right]$.
Then, 
$$
\P_f\left(Z_{(n-k+1)} < t_{\alpha_n,k}\right) \leq \beta,
$$
if 
\begin{equation}
(1-\varepsilon) \bar\Phi\left(t_{\alpha_n,k}\right) + \varepsilon \bar\Phi\left(t_{\alpha_n,k} - \tau\right)
> \frac k n + \frac{2}{n\beta} +  \frac{2\sqrt{1 + 4k\beta}}{n\beta} .
\label{Cond1:unid}
\end{equation}
Now, we consider
$k \in \K_n$ such that
$$ \frac{0.99}{2} \bar\Phi(M)\leq \frac{k}{n} \leq  0.99 \ \bar\Phi(M) .$$
The set of solutions of this inequation is not empty since under the assumptions of Lemma \ref{lem:testunid}, $  0.99 \ \bar\Phi(M) n \geq 1$.
Note that
$|\K_n| \leq \log_2(n/2)$, hence $\alpha _n \geq \alpha/|\K_n| \geq
\alpha/\log_2(n/2)$. We will show that Condition \eqref{Cond1:unid} is fulfilled.
Using a Taylor expansion at the order 1, 
$$
(1-\varepsilon) \bar\Phi\left(t_{\alpha_n,k}\right) + \varepsilon \bar\Phi\left(t_{\alpha_n,k} - \tau\right)
= \bar\Phi\left(t_{\alpha_n,k}\right) + \varepsilon \tau \phi(a)
$$
where $a$ belongs to the interval $\left]t_{\alpha_n,k}-\tau,t_{\alpha_n,k}\right[$.
We recall that $\bar\Phi\left(t_{\alpha_n,k}\right) = \frac{k}{n}  \left[1 - \sqrt{\frac{2 \ln( 2 /\alpha_n)}{k}}\right] $.
Using (\ref{def:talphak}),  we just have to show that
\begin{equation}\label{eq:anbn}
\varepsilon \tau \phi(a) \geq\frac{2}{\beta n} + \frac{2\sqrt{1 + 4k\beta}}{\beta n} + \frac{\sqrt{k}}{n} \sqrt{2\ln(2 /\alpha_n)},
\end{equation}
in order to prove that  Condition \eqref{Cond1:unid} holds. \\
\ \\
Next, we want to prove that $\left[t_{\alpha_n,k}-\tau, t_{\alpha_n,k}\right]$
remains included in a fixed interval $[c_1(M),c_2(M)]$ with $c_1(M)>0$.
\\
On one hand, we have
$$ t_{\alpha_n,k} \geq  \bar\Phi^{-1}\left( \frac{k}{n} \right) \geq \bar\Phi^{-1}\left(  0.99 \ \bar\Phi(M) \right),   $$
and
$$ t_{\alpha_n,k}  -M  \geq  \bar\Phi^{-1}\left(   0.99\  \bar\Phi(M) \right) -M :=c_1(M)>0.$$
Moreover,
\begin{eqnarray*}
  \bar\Phi\left(t_{\alpha_n,k}\right) &\geq & \frac{0.99}{2} \bar\Phi(M) - \sqrt{\frac{2 \ln( 2 /\alpha_n)}{n}}\sqrt{ 0.99 \  \bar\Phi(M) } \\
&\geq& \frac{\bar\Phi(M)}{200}
\end{eqnarray*}
since  $(8.25){\ln(2\log_2(n/2)/\alpha)}/{n} \leq \bar\Phi(M)$. This implies that
$$t_{\alpha_n,k} \leq  \bar\Phi^{-1}\left( \bar\Phi(M)/200\right)  :=c_2(M).$$
 Finally, since $\phi(a) \geq \phi(c_2(M))$,   \eqref{eq:anbn} is satisfied if
$ \varepsilon \tau \geq C(\beta,M) \sqrt{\ln\ln(n)  \ln(1/\alpha) / n} $ for some suitable constant $C(\alpha, \beta,M)$.

\end{proof}

\section*{Acknowledgements}
This work was supported by the French Agence Nationale de la Recherche (ANR-13-JS01-0001-01, project MixStatSeq).

\bibliographystyle{apalike}
\bibliography{biblio}
\end{document}